\newcommand{\todofloat}[1]{}
\newcommand{\vx}{\mathbf{x}}
\newcommand{\vy}{\mathbf{y}}
\newcommand{\vp}{\mathbf{p}}
\newcommand{\tD}{\mathbf{D}}
\newcommand{\Id}{\mathbf{I}}
\newcommand{\Sym}[1]{\ensuremath{\mathrm{Sym}^{#1\times #1}}}
\newcommand{\R}{\mathbb{R}}
\newcommand{\N}{\mathbb{N}}
\newcommand{\E}{\ensuremath{\mathcal{E}}}
\newcommand{\Vm}{\ensuremath{V_{\textrm{m}}}}
\newcommand{\norm}[1]{\left\lVert#1\right\rVert}
\newcommand{\diff}[1]{\ensuremath{\operatorname{d}\!{#1}}}
\newcommand{\abs}[1]{\left\lvert#1\right\rvert}
\newcommand{\sphere}{\mathbb{S}}
\DeclareMathOperator{\proj}{proj}
\DeclareMathOperator*{\argmin}{arg\,min}
\pgfplotsset{compat=newest}
\newtheorem{theorem}{Theorem}
\newtheorem{lemma}{Lemma}
\newtheorem{proposition}{Proposition}
\newtheorem{corollary}{Corollary}
\newtheorem{remark}{Remark}
\newcommand{\TheMethod}{GEASI\xspace}
\NewDocumentCommand{\bywhom}{m}{
  {\nobreak\hfill\penalty50\hskip1em\null\nobreak
   \hfill\mbox{\normalfont(#1)}%
   \parfillskip=0pt \finalhyphendemerits=0 \par}%
}
\NewDocumentEnvironment{pquotation}{m}
  {\begin{quoting}[
     indentfirst=true,
     leftmargin=\parindent+.5cm,
     rightmargin=\parindent+.5cm]\itshape}
  {\bywhom{#1}\end{quoting}}
\newcommand{\mytextcirc}[1]{\raisebox{.5pt}{\textcircled{\raisebox{-.9pt}{\textbf{#1}}}}}
\title{\TheMethod: Geodesic-based Earliest Activation Sites Identification in cardiac models}
\author[1,4]{Thomas Grandits}
\author[1,5,6]{Alexander Effland}
\author[1,4]{Thomas Pock}
\author[2]{Rolf Krause}
\author[3,4]{Gernot Plank}
\author[2,*]{Simone Pezzuto}
\affil[1]{\small Institute of Computer Graphics and Vision, TU Graz, Inffeldgasse 16, 8010 Graz, Austria}
\affil[2]{Center for Computational Medicine in Cardiology, Euler Institute, Universit\`a della Svizzera italiana, via la Santa 1, 6900 Lugano, Switzerland}
\affil[3]{Gottfried Schatz Research Center - Division of Biophysics, Medical University of Graz, Neue Stiftingtalstra\ss e 6/IV, 8010 Graz, Austria}
\affil[4]{BioTechMed-Graz, Graz, Austria}
\affil[5]{Silicon Austria Labs (TU Graz SAL DES Lab), Graz, Austria}
\affil[6]{Institute for Applied Mathematics, University of Bonn, Endenicher Allee 60, 53115 Bonn, Germany}
\affil[*]{Corresponding author: \texttt{simone.pezzuto@usi.ch}}
\date{}
\begin{document}

\maketitle

\begin{abstract}
The identification of the initial ventricular activation sequence
is a critical step for the correct personalization of patient-specific
cardiac models.  In healthy conditions, the Purkinje network is the main
source of the electrical activation, but under pathological conditions
the so-called earliest activation sites (EASs) are possibly sparser and more localized.
Yet, their number, location and timing may not be easily inferred
from remote recordings, such as the epicardial activation or the
12-lead electrocardiogram (ECG), due to the underlying complexity of the model.

In this work, we introduce \TheMethod (\textbf{G}eodesic-based \textbf{E}arliest \textbf{A}ctivation \textbf{S}ites \textbf{I}dentification) as a novel approach to simultaneously identify all EASs. 
To this end, we start from the anisotropic eikonal equation modeling cardiac electrical activation and exploit its Hamilton--Jacobi formulation to minimize a given objective function, e.g.~the quadratic mismatch to given activation measurements.
This versatile approach can be extended to estimate the number of activation sites by means of the topological gradient, or fitting a given ECG.

We conducted various experiments in 2D and 3D for in-silico models and an in-vivo intracardiac recording collected from a patient undergoing cardiac resynchronization therapy.
The results demonstrate the clinical applicability of \TheMethod for potential future personalized models and clinical intervention.

\smallskip
\noindent\textbf{Keywords.}
earliest activation sites;
eikonal equation;
Hamilton--Jacobi formulation;
topological gradient;
inverse ECG problem;
cardiac model personalization

\end{abstract}

\section{Introduction}
\label{sec:intro}
In this work, we address the central question of identifying earliest
activation sites (EASs) in a propagation model for ventricular activation.
In a healthy (human) heart, ventricles are activated via a specific pathway
that originates in the atrio-ventricular node, continues in the His bundle and
the Purkinje network, to eventually spread in the myocardium through
Purkinje-myocardial junctions~\cite{vigmond2016modeling}.  These junction
points can be effectively modeled by a discrete set of EASs, that form the
initial condition of the propagation model.  Unfortunately, the precise
structure of the set of EASs (defined by their number, location and timing) cannot
be detected \emph{in vivo}, and rule-based approaches are limited by
inter-patient variability.  More importantly, severe pathological conditions
such as intraventricular conduction disorders are directly associated with 
partially malfunctioning activation pathways, hence corresponding to a
pathological set of EASs.  A correct and possibly automatic identification of
EASs from non-invasive or minimally-invasive recordings is therefore of high
clinical relevance, especially in selecting the optimal treatment for the
patient~\cite{auricchio2003clinical}.

A particularly suitable propagation model in the context of EASs is the
anisotropic eikonal equation, which was originally exploited as a convenient
approximation of the monodomain and bidomain models~\cite{keener1991eikonal,
franzone1990wavefront}, but is nowadays more often utilized for its
computational efficiency~\cite{neic_efficient_2017,pezzuto_evaluation_2017}.
This work, however, leverages the eikonal model from a novel perspective, based
on Hamilton-Jacobi formalism and geodesics~\cite{crandall1984some}, to enable a gradient-based
approach for localizing the EASs termed \TheMethod (\textbf{G}eodesic-based \textbf{E}arliest \textbf{A}ctivation \textbf{S}ites \textbf{I}dentifi\-cation).
In detail, we start from the anisotropic eikonal equation as a common model for cardiac electrophysiology~\cite{franzone_spreading_1993}, in which the EASs define boundary conditions at specific sites. 
For numerical reasons, the eikonal equation is solved using the Fast Iterative Method (FIM)~\cite{FIM_tetra}.
The main goal of our approach is the minimization of a given objective functional depending on the solution of the anisotropic eikonal equation as a function of the EASs.
Here, a feasible optimization strategy involves the Hamilton--Jacobi formalism, which promotes a tractable derivative with respect to the EASs~\cite{crandall1984some}.
Note that this derivative is geometrically related to the tangent of the geodesic at the EASs.
In this respect, a geodesic connects an EAS such as a Purkinje entry point to an observation through a path of minimum distance in a predefined metric.
Finally, we exploit the aforementioned methods to introduce \TheMethod, which in its core employs a quadratic mismatch between the eikonal solution and the measurements, such as observations of an electro-anatomical mapping, in the objective function.
In summary, \TheMethod can therefore fit the parameters of the eikonal model to clinical recordings in a very efficient and flexible manner.



We emphasize that \TheMethod is not limited to this quadratic objective function and can straightforwardly be extended to other scenarios.
In this work, we additionally investigate two such extensions: the topological gradient designed for estimating the number of EASs and fitting of EASs of an eikonal-based ECG model to a clinically recorded ECG.

Changing the number of EASs and the effect of this action on the objective function is evaluated by means of the topological gradient. 
The concept of topological gradients can be readily introduced via the Hamilton--Jacobi theory.
Here, we consider the splitting of a single EAS into a pair of two EASs symmetrically arranged at infinitesimal distance along a given direction in a dipole-like fashion.
Thus, the topological gradient provides a criterion to decide whether an EAS should be split or not.
In particular, this approach promotes a simple model as a starting point with too little complexity to represent the measurement data and increase the number of source sites until the encountered activations are properly approximated.

In combination with the pseudo-bidomain model, a template-based action potential and the lead field theory, the eikonal model also results in an almost-real-time ECG simulator~\cite{neic_efficient_2017,pezzuto_evaluation_2017} with remarkable physiological accuracy~\cite{franzone_spreading_1993}.
Here, we extend a previously proposed approach~\cite{Pezzuto2021ECG}, based on this ECG model, to solve the inverse ECG problem, i.e.~we exploit \TheMethod to localize EASs purely from ECG data.
Numerical experiments in \Cref{sec:experiments} demonstrate that the proposed approach is capable of finding the optimal EASs even in high-fidelity cardiac models.
We visually summarized \TheMethod and its applications in \Cref{fig:graphical_abstract}.

\begin{figure}[tb]
    \includegraphics[width=.99\linewidth, trim={0cm 0cm 0cm 0cm}, clip]{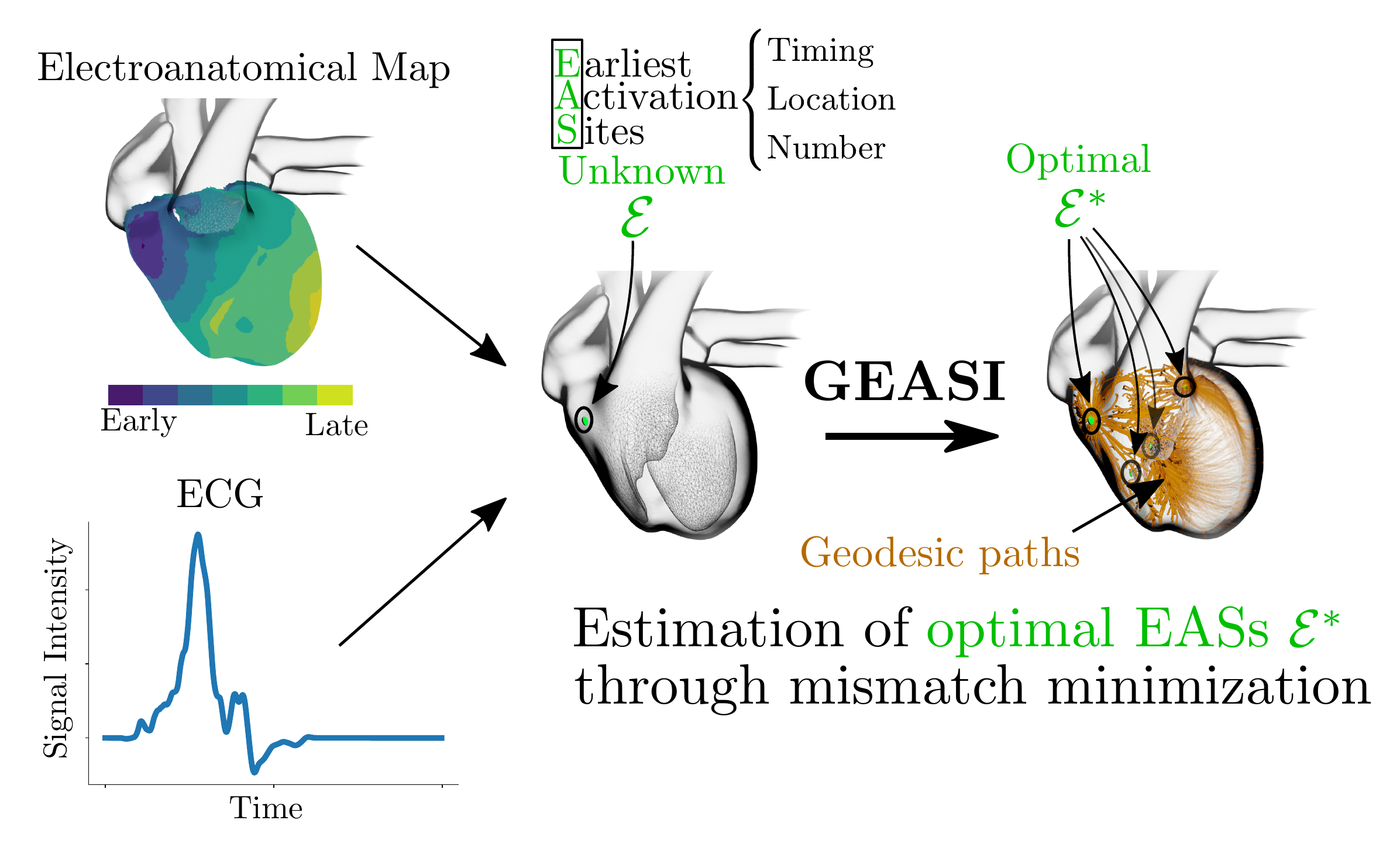}
    \caption{
        \TheMethod in a nutshell: 
        The EASs --- given by their timings, locations and number --- are important parameters that define cardiac activation.
        \TheMethod is able to identify these parameters by exploiting the Hamilton--Jacobi formulation.
        This formulation allows for an efficient optimization scheme for minimizing mismatches to either activation maps or resulting quantities such as the ECG.
        Using the topological gradient we can additionally estimate the number of EASs.
    }
    \label{fig:graphical_abstract}
\end{figure}

\subsection{Related Work}
In what follows, we briefly review similar and related approaches to \TheMethod.

From a physiological perspective, EASs can be derived from an automatically
generated Purkinje network~\cite{costabal2016generating}, closely following the
actual anatomy of the heart. The approach, anatomically-tailored but not
patient-specific, is indicated in the case of a generic healthy activation and
a (complete or partial) bundle branch block.  When dense endocardial mapping
data are available, the Purkinje network can be estimated
automatically~\cite{palamara_effective_2015, barber2018automatic,
ulysses_optimization-based_2018}.  In the method proposed by
Palamara \emph{et al.}, the Purkinje network is created from
intra-cardiac measurements by dividing the endocardium into regions of
influence for each Purkinje entry point.  For this purpose, an isotropic
eikonal equation is solved for each measurement point to compute the regions of
influence in a Voronoi-diagram like fashion.  According to fractal laws,
Purkinje entry points are subsequently either moved, deleted or generated to
better fit the observed activation on the endocardium.  The connection with
\TheMethod becomes apparent once we consider the underlying problem in terms of
geodesics and regions of influence, further discussed in \Cref{sec:methods}.
We can observe that \TheMethod is a generalization of the aforementioned
approach, since it gives rise to Voronoi partitions with non-linear boundaries.
Thus, \TheMethod can be applied to heterogeneous conduction velocities and
heterogeneous fiber directions.

An alternative formulation of the Purkinje network is based on a very sparse
set of EASs embedded in thin, fast-conducting layers in both ventricular
endocardia~\cite{Lee2019}.  In some sense, this approach can be referred to as
\emph{lumped Purkinje network} formulation.  Here, the number of EASs is
drastically reduced.  In fact, a few sites per chamber are generally sufficient
to correctly capture the activation and reproduce the surface
ECG~\cite{Potse2014}.  Overall, the parameters of the eikonal model with lumped
Purkinje network are just the location, the number and the activation onset of
the EASs, as well as the conduction velocities in the thin layers and the
myocardium.  The problem of fitting these parameters to clinical data has
already been considered in the literature for the conduction
velocities~\cite{barone2020experimental, Grandits2020PIEMAP,
stella2020integration,grandits_inverse_2020}.  However, the optimization of
EASs received limited attention so far with only a few works dealing with
activation onsets~\cite{kallhovd2018inverse} or
locations~\cite{kunisch_inverse_2019}.  The simultaneous optimization of EASs
(especially their number) and conduction velocity has been analyzed only very
recently in~\cite{Giffard-Roisin2019, grandits_inverse_2020,Pezzuto2021ECG}.

In the work by Kunisch \emph{et al.}, the authors recast the problem
of localizing EASs as a shape optimization problem.  In their viscous eikonal
formulation, the EASs are modelled as small spherical holes in the domain whose
boundaries impose the activation onset.  Under sufficient smoothness
assumptions, an adjoint state can be defined through the shape derivative with
respect to these boundaries, and therefore be exploited to optimize the EASs.
This approach is efficient and can be applied to multiple pacing sites,
although---in contrast to our approach---no topological changes are permitted
by the formulation, impeding both a change in the number of EASs and movement
from the interior to the boundary of the domain.  Additionally, the coalescence
of multiple sites needs special treatment in the shape derivative (not
addressed in \cite{kunisch_inverse_2019}).  The viscous eikonal formulation,
moreover, introduces a curvature-dependent conduction velocity, potentially
strong at EASs, posing limitations on the radius of the spherical holes.

It is worth noting that all previous works, either based on the
full~\cite{palamara_effective_2015} or a
lumped~\cite{kunisch_inverse_2019,kallhovd2018inverse} Purkinje network,
require local measurements of activation times, e.g.~endocardial maps, whereas
\TheMethod can be applied to fit epicardial recordings and even the surface
ECG.  This aspect is relevant in view of non-invasive personalization of
patient-specific models, as recently advocated~\cite{corral2020digital}.
Moreover, a major challenge in cardiac personalization addressed by \TheMethod
is the estimation of the ground truth number of EASs.  A possible solution is
to consider a large number of EASs densely covering the earliest activation
region, and successively removing sites according to some predefined
rule~\cite{Pezzuto2021ECG}.  For instance, an optimization procedure could
determine the optimal activation onset of all sites, and then remove those with
a very late onset.  In a previous study, we optimized the initiation times
along with the anisotropic conductivity tensors by manually deriving
the Fast Iterative Method~\cite{FIM_tetra}.  The large number of EASs results
in a highly ill-posed inverse problem and consequently requires further
regularization to successively remove initiation sites.  While providing good
results with respect to the measured activation
times~\cite{grandits_inverse_2020, Pezzuto2021ECG}, this approach heavily
relies on initial choice of EASs and the selected regularization strategy.

\subsection{Notation}
\label{sec:notation}
We denote the identity matrix by~$\Id$ and the space of symmetric and positive definite matrices in~$\R^d$ by $\Sym{d}$.
Throughout this work, $B_\zeta(\vx)$ and $\overline{B}_\zeta(\vx)$ refer to the open and closed ball with radius~$\zeta>0$ around~$\vx$.
We denote by $\mathring{X}$ the interior of the set $X$.
$[A]_{ij}$ denotes the entry in the $i$-th row and $j$-th column of a matrix $A \in \R^{n_1 \times n_2}$.
Furthermore, $A\succ B$ if $A-B$ is positive definite for $A,B\in\Sym{d}$, and we set $\llbracket N\rrbracket\coloneqq\{1,\ldots,N\}$.
The set of unit vectors in~$\R^d$ is denoted by~$\sphere^{d-1}$.
The Dirac measure of a set~$S$ is referred to as~$\delta_{[S]}$.
Further, we use the notation~$C^0(X,Y)$ for the space of continuous functions mapping from~$X$ to~$Y$ endowed with the norm~$\Vert\cdot\Vert_{C^0(X,Y)}$, and we denote by $C^k(X,Y)$ the associated space of $k$-times continuously differentiable functions equipped with the norm~$\Vert\cdot\Vert_{C^k(X,Y)}$.
We use the symbol~$C^{k,\alpha}(X,Y)$ for the H\"older space with exponent~$\alpha$ and norm~${\Vert\cdot\Vert_{C^{k,\alpha}(X,Y)}}$.
Finally, we denote by~$L^p(X,Y)$ and~$W^{m,p}(X,Y)$ the $p$-Lebesgue space and the Sobolev space of $m$-times weakly differentiable and $p$-integrable functions, and we set $H^m(X,Y)=W^{m,2}(X,Y)$.

\subsection{Structure of the Work}
In \Cref{sec:methods}, we successively introduce the eikonal equation and the objective functional, which are the buildings blocks of \TheMethod.
Based on the introduced algorithm, we present in \Cref{sec:methods_extension} the topological gradient as well as the ECG fitting problem.
Then, we elaborate in \Cref{sec:discretization} on efficient discretization schemes and implementation detail of the proposed method.
In \Cref{sec:experiments}, we consider the problem of estimating the initiation sites for different models---primarily in-silico experiments, but also one in-vivo experiment.
Further aspects of future work are addressed in \Cref{sec:discussion}.

\section{\TheMethod}
\label{sec:methods}
Next, we introduce the \TheMethod method, which encompasses the following ingredients. 
In \Cref{sec:intro_example}, we review the anisotropic eikonal equation and its associated Hamilton--Jacobi formulation.
Subsequently, in \Cref{sec:intro_min_problem} we analyze a general objective function involving the solution of the anisotropic eikonal equation from a functional-analytical perspective.
\Cref{sec:geodesic_extraction} deals with the gradient computation of the distance function, which is later exploited in the aforementioned objective functional.
Finally, all introduced concepts are combined in \Cref{sec:geodesic_opt} to define \TheMethod.

\subsection{Eikonal equation}
\label{sec:intro_example}

We consider the computational domain~$\Omega\subset\R^d$ for $d\geq 2$, which in most cases represents the myocardium.
Further, let $\E$ be the subset of $N$ pairs $\{(\vx_i,t_i)_{i=1}^N\}\in\mathcal{U}_N\coloneqq\Omega^N\times (T_\mathrm{min},T_\mathrm{max})^N$ for a priori given $T_\mathrm{min}<T_\mathrm{max}$ and fixed $N$.
Throughout this work, $\E$ is a set of EASs, where $N$ is the number of EASs, $\vx_i$ and $t_i$ are the location and timing of the $i$-th site, respectively.
Let $\phi_\E:\Omega\to\R$ be the unique solution of the anisotropic eikonal equation with prescribed values on~$\E$, which is commonly referred to as the activation map.
That is, $\phi_\E(\vx)$ is the first arrival time at $\vx\in\Omega$
of the propagating action potential.
Hence, $\phi_\E$ solves
\begin{equation}\label{eq:eikonal}
\left\{
\begin{array}{rcll}
\sqrt{\tD(\vx)\nabla\phi_\E(\vx)\cdot\nabla\phi_\E(\vx)} &=& 1,
& \vx\in\Omega\setminus\{\vx_1,\ldots,\vx_N\}, \\[1em]
\phi_\E(\vx_i) &=& t_i, & (\vx_i,t_i)\in\E,
\end{array}
\right.
\end{equation}
where $\tD\in C^1(\overline\Omega,\Sym{d})$ describes the anisotropic conduction.
In the model, the anisotropy arises from the fiber alignment inside the heart~\cite{franzone_spreading_1993}.
Recall that $\Sym{d}$ is defined as the set of positive definite and symmetric $d\times d$-matrices, which gives rise to the definition of the norm $\Vert\mathbf{p}\Vert_{\tD}\coloneqq\sqrt{\tD\mathbf{p}\cdot\mathbf{p}}$ for $\mathbf{p}\in\R^d$.
Note that the assumptions already guarantee that
\begin{equation*}
\lambda_{*} \mathbf{I}\prec \tD(\vx) \prec \lambda^{*}\mathbf{I}
\end{equation*}
for all $\vx\in\overline\Omega$ and finite bounds $0 < \lambda_{*} \le \lambda^{*} < \infty$.
It is well known that the eikonal equation~\eqref{eq:eikonal} admits a unique viscosity solution according to the theory of Hamilton--Jacobi equations~\cite{crandall1984some}. 
The Lipschitz continuous solution of the eikonal equation $\phi_\E\in C^{0,1}(\overline\Omega)$ is of the form
\begin{equation}\label{eq:mineiko}
\phi_\E(\vx) = \min_{(\vy,t)\in\E}
\{ t + \delta(\vx,\vy) \}
\end{equation}
where $\delta(\vx,\vy)$ denotes the \emph{geodesic distance}
\begin{equation}\label{eq:distance}
\delta(\vx,\vy)
= \inf_{\widehat\gamma\in H^1([0,1],\overline\Omega)} \left\{
L(\widehat{\gamma}):\widehat\gamma(0)=\vx,\widehat\gamma(1)=\vy
\right\}
\end{equation}
given the length functional
\begin{equation}
L(\gamma)\coloneqq\int_0^1 \Vert\dot{\gamma}(t)\Vert_{\tD^{-1}\left(\gamma(t)\right)}\diff{t}.
\label{eq:lengthFunctional}    
\end{equation}
Thus, the induced Riemannian metric for two vectors $\mathbf{v},\mathbf{w}\in\mathbb{R}^d$ is 
\begin{equation}
\langle\mathbf{v},\mathbf{w}\rangle_{\gamma(t)}\coloneqq\tD^{-1}\left(\gamma(t)\right)\mathbf{v}\cdot\mathbf{w}.
\label{eq:RiemannianMetric}
\end{equation}
We note that the infimum~$\gamma$ in~\eqref{eq:distance} is actually attained, and by the geodesic equation we can even deduce $\gamma\in C^{0,1}([0,1],\overline\Omega)$ (see e.g.~\cite{bornemann2006finite}).
Indeed, in the definition~\eqref{eq:distance}, we first note that $\Vert\vp \Vert_{\tD^{-1}(\vx)}\leq\lambda_\ast^{-1}\Vert\vp\Vert_2$ for all $\vp\in\R^d$ and $\vx\in\Omega$.
Then, for any segment $[\vx,\vy]$ fully contained in $\overline\Omega$ we have that $\delta(\vx,\vy)\leq\lambda_\ast^{-1}\Vert\vx-\vy\Vert_2$ since the segment is a geodesic path in the Euclidean norm. 
\Cref{fig:geodesics} illustrates a single geodesic path in red on a domain with a continuously varying conduction velocity and isotropic conduction.
\begin{figure}[tb]
\resizebox{\linewidth}{!}{
\includegraphics{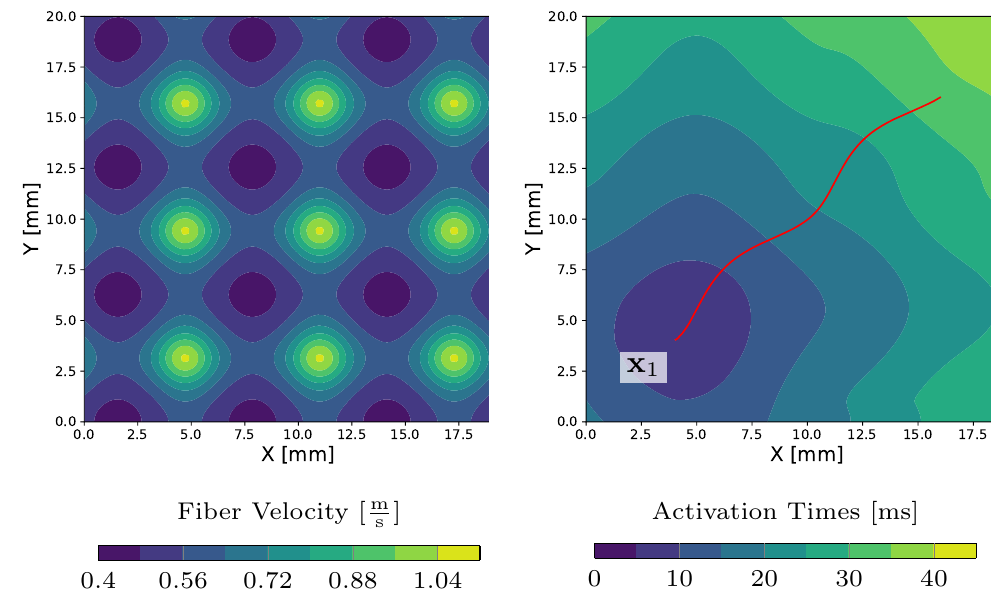}
}
\caption{
Left: fixed velocity field~$c$ in~$\Omega$.
Right: contour plot of the associated anisotropic eikonal equation with anisotropic conduction~$\tD(\vx)=c(\vx)^2\Id$ along with the geodesic path joining the EAS $\vx_1$ with an arbitrary point.}
\label{fig:geodesics}
\end{figure}

\label{rev:R1compat}
When $N>1$, all pairs $(\vx_i,t_i),(\vx_j,t_j)\in\E$ must satisfy the subsequent compatibility condition 
\begin{equation}
t_i-t_j \leq\delta(\vx_i,\vx_j)
\label{eq:compatibilityCondition}
\end{equation}
in order to ensure the existence of a solution. This fact is a direct consequence
of~\eqref{eq:mineiko}, because non-compatible data can not exist w.r.t.~the eikonal equation.  
For the purpose of this work, this
condition is not too restrictive, since we aim at identifying EASs rather than
enforcing them.
Interestingly, the condition is also physiologically sound: if a stimulus at some location $\vx_j$ is applied too late, e.g., right after the passage of an activation front originating from $\vx_i$, it should not trigger another propagation, because the tissue is already depolarized. In fact, under such circumstances the activation time $t_j$ at $\vx_j$ would be larger than
the travel time from $\vx_i$, that is $t_i + \delta(\vx_i,\vx_j)$, clearly violating~\eqref{eq:compatibilityCondition}.

The Hamilton--Jacobi formulation is essential for computing perturbations of~$\E$, which is conducted in the following subsection.

\subsection{Objective functional}
\label{sec:intro_min_problem}
The overall objective of this work is the minimization of a given functional $\mathcal{J}:C^{0,1}(\overline\Omega)\to\R$ depending on the activation map $\phi_\E$ with respect to~$\E$, i.e.
\begin{equation}
m_N\coloneqq\min_{\E\in\overline{\mathcal{U}_N}}
\mathcal{J}(\phi_\E).
\label{eq:initial_opt_problem}
\end{equation}
For instance, the objective could describe the minimization of a mismatch (in the least-squares sense) between the simulated activation and the activation detected from epicardial, as well as endocardial mapping (see \Cref{sec:experiments}).
The objective functional can also involve the activation map implicitly: 
In \Cref{sec:experiments_ecg}, we utilize the mismatch between the recorded and simulated 12-lead surface ECG as a metric for optimization.

In what follows, we prove the existence of minimizers for~\eqref{eq:initial_opt_problem} for varying~$N$.
To this end, we define for $\E=\{(\vx_i,t_i)_{i=1}^N\}$ 
\begin{equation}
\Phi_N(\vx,\vx_1,\ldots,\vx_N,t_1,\ldots,t_N)\coloneqq\phi_\E(\vx).
\label{eq:PhiNDefinition}    
\end{equation}
\begin{lemma}
$\Phi_N\in C^{0,1}(\overline{\Omega\times\mathcal{U}_N})$ is a bounded function of its arguments. \end{lemma}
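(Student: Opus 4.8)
The goal is to establish that $\Phi_N$, the activation map viewed as a function of both the evaluation point $\vx$ and all the EAS parameters $(\vx_i, t_i)_{i=1}^N$, is bounded and Lipschitz continuous on the closure $\overline{\Omega \times \mathcal{U}_N}$. My plan is to exploit the explicit representation formula \eqref{eq:mineiko} rather than argue directly from the PDE \eqref{eq:eikonal}, since the min-plus structure makes the regularity almost transparent.

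\medskip

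\emph{Boundedness.} First I would observe that from \eqref{eq:mineiko} we have
\begin{equation*}
\Phi_N(\vx,\vx_1,\ldots,\vx_N,t_1,\ldots,t_N) = \min_{i \in \llbracket N \rrbracket} \{ t_i + \delta(\vx,\vx_i) \}.
\end{equation*}
Each $t_i$ lies in the bounded interval $(T_\mathrm{min}, T_\mathrm{max})$, and the geodesic distance is controlled by $0 \le \delta(\vx,\vx_i) \le \lambda_\ast^{-1} \operatorname{diam}(\Omega)$ on a bounded domain (using the estimate $\delta(\vx,\vy)\le\lambda_\ast^{-1}\Vert\vx-\vy\Vert_2$ already established in the excerpt, provided $\Omega$ is convex, or more generally using that $\overline\Omega$ is compact and geodesically bounded). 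Hence $\Phi_N$ is trapped between $T_\mathrm{min}$ and $T_\mathrm{max} + \lambda_\ast^{-1}\operatorname{diam}(\Omega)$, giving boundedness immediately.

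\medskip

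\emph{Lipschitz continuity.} The key structural fact is that a pointwise minimum of a finite family of $L$-Lipschitz functions is again $L$-Lipschitz. So it suffices to show that each summand $(\vx,\vx_i,t_i) \mapsto t_i + \delta(\vx,\vx_i)$ is Lipschitz jointly in its arguments, with a constant uniform in $i$ and independent of the other variables. The $t_i$-dependence is trivially $1$-Lipschitz. For the distance term I would prove that $\delta(\cdot,\cdot)$ is Lipschitz in \emph{both} arguments via the triangle inequality: for the first argument, $|\delta(\vx,\vy) - \delta(\vx',\vy)| \le \delta(\vx,\vx') \le \lambda_\ast^{-1}\Vert\vx-\vx'\Vert_2$, and symmetrically for the second argument (here I use that $\delta$ is symmetric, which follows from the symmetry of $\tD^{-1}$ in the length functional \eqref{eq:lengthFunctional} under time-reversal of the path). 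Combining the two estimates gives joint Lipschitz continuity of $\delta$ with constant $\lambda_\ast^{-1}$, and therefore of each summand, and thus of $\Phi_N$.

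\medskip

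The main obstacle I anticipate is the triangle-inequality step for $\delta$ when $\Omega$ is non-convex: the concatenation of two geodesics $\vx \rightsquigarrow \vx'$ and $\vx' \rightsquigarrow \vy$ is an admissible $H^1$ competitor path only if it stays inside $\overline\Omega$, which the geodesic paths do by construction, so the triangle inequality $\delta(\vx,\vy) \le \delta(\vx,\vx') + \delta(\vx',\vy)$ holds on any path-connected domain. However, the clean bound $\delta(\vx,\vx')\le\lambda_\ast^{-1}\Vert\vx-\vx'\Vert_2$ used above relies on the straight segment $[\vx,\vx']$ lying in $\overline\Omega$, which fails for non-convex domains; there one only gets local Lipschitz estimates or a Lipschitz constant depending on the geodesic-versus-Euclidean distortion of $\Omega$. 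For the purposes of this lemma it is enough that such a finite constant exists on the compact set $\overline\Omega$ (guaranteed by the uniform ellipticity bounds on $\tD$ and the regularity of $\partial\Omega$), so I would either assume $\Omega$ convex for the explicit constant or invoke compactness to extract a finite uniform Lipschitz constant. The extension from the open set to its closure $\overline{\Omega\times\mathcal{U}_N}$ is then automatic by uniform continuity.
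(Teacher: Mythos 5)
Your proposal is correct and follows essentially the same route as the paper's (very terse) proof: rewrite $\Phi_N$ via the representation formula \eqref{eq:mineiko} as a finite minimum of $t_i+\delta(\vx_i,\vx)$, then conclude from the Lipschitz continuity of $\delta$ and the compactness of $\overline{\Omega\times\mathcal{U}_N}$. Your additional care about the triangle inequality and the non-convex case is a legitimate filling-in of details the paper leaves implicit, not a different argument.
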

\begin{proof}
Using~\eqref{eq:mineiko}, we immediately see that
\[
\Phi_N(\vx,\vx_1,\ldots,\vx_N,t_1,\ldots,t_N)=\min_{i=1,\ldots,N}\left\{ t_i+\delta(\vx_i,\vx)\right\}.
\]
The Lipschitz continuity of~$\delta$ as well as the compactness of $\overline{\Omega\times\mathcal{U}_N}$ imply the statement.
\end{proof}
We note that Rademacher's theorem ensures the differentiability of~$\Phi_N$ almost everywhere. 
Non-differentiability with respect to $\vx$ occurs for instance at $\vx=\vx_i$, but also in the presence of front collisions. 
An immediate consequence of this lemma is the following
\begin{theorem}[Existence]
\label{thm:existence}
If $\mathcal{J}$ is uniformly continuous, then the problem~\eqref{eq:initial_opt_problem} admits at least one minimum.
\end{theorem}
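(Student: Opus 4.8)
The plan is to apply the direct method of the calculus of variations, exploiting that the admissible set $\overline{\mathcal{U}_N}$ is finite-dimensional and compact. First I would observe that $\overline{\mathcal{U}_N}=\overline{\Omega}^N\times[T_{\mathrm{min}},T_{\mathrm{max}}]^N$ is a closed and bounded subset of $\R^{N(d+1)}$, hence compact, since $\overline\Omega$ is compact. I would then note that the infimum $m_N$ is finite: the family $\{\phi_\E:\E\in\overline{\mathcal{U}_N}\}$ is equibounded in $C^0(\overline\Omega)$ by the preceding Lemma, hence precompact, and a uniformly continuous $\mathcal{J}$ is bounded on it. One may therefore pick a minimizing sequence $\E^{(k)}\in\overline{\mathcal{U}_N}$ with $\mathcal{J}(\phi_{\E^{(k)}})\to m_N$ and extract, by Bolzano--Weierstrass, a subsequence (not relabeled) converging to some $\E^\ast\in\overline{\mathcal{U}_N}$; concretely the locations $\vx_i^{(k)}\to\vx_i^\ast$ and the timings $t_i^{(k)}\to t_i^\ast$.

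The second step upgrades this parameter convergence to convergence of the activation maps, and here I would lean entirely on the Lemma, which asserts $\Phi_N\in C^{0,1}(\overline{\Omega\times\mathcal{U}_N})$. Freezing the spatial argument $\vx$ and using the joint Lipschitz estimate of $\Phi_N$ in its remaining arguments yields a constant $L$ with $\sup_{\vx\in\overline\Omega}|\Phi_N(\vx,\E^{(k)})-\Phi_N(\vx,\E^\ast)|\le L\,\Vert\E^{(k)}-\E^\ast\Vert$, that is, $\Vert\phi_{\E^{(k)}}-\phi_{\E^\ast}\Vert_{C^0(\overline\Omega)}\to 0$. In other words, the map $\E\mapsto\phi_\E$ is Lipschitz from $\overline{\mathcal{U}_N}$ into $C^0(\overline\Omega)$, so $\phi_{\E^{(k)}}\to\phi_{\E^\ast}$ uniformly.

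The final step passes to the limit in the objective. Since every activation map obeys the equi-Lipschitz bound $\mathrm{Lip}(\phi_\E)\le\lambda_\ast^{-1/2}$ coming directly from~\eqref{eq:eikonal}, the sequence stays in a fixed bounded ball of $C^{0,1}(\overline\Omega)$, and the uniform continuity of $\mathcal{J}$ then gives $\mathcal{J}(\phi_{\E^{(k)}})\to\mathcal{J}(\phi_{\E^\ast})$. Hence $\mathcal{J}(\phi_{\E^\ast})=m_N$, so $\E^\ast$ is a minimizer. Equivalently, $\E\mapsto\mathcal{J}(\phi_\E)$ is continuous on the compact set $\overline{\mathcal{U}_N}$ and the Weierstrass extreme value theorem applies.

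I expect the only delicate point to be the topology in which the limit is taken: the Lemma delivers uniform ($C^0$) convergence of $\phi_{\E^{(k)}}$, but \emph{not} convergence in the full $C^{0,1}$ norm, since the gradients $\nabla\phi_\E$ generically jump across the moving ridge set where the argmin in~\eqref{eq:mineiko} switches, so the Lipschitz seminorm need not converge. The argument therefore hinges on reading the uniform continuity of $\mathcal{J}$ with respect to the sup-norm --- the topology actually furnished by the $\Phi_N$ estimate --- which is precisely the hypothesis that transfers the $C^0$-convergence of the maps to convergence of the objective values. Making this compatibility explicit is, I think, the only nontrivial aspect; the compactness of $\overline{\mathcal{U}_N}$ and the continuity of $\E\mapsto\phi_\E$ are then routine consequences of the Lemma.
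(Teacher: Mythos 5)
Your proof is correct and follows essentially the same route as the paper, whose own proof is a one-line appeal to the Lemma (Lipschitz continuity of $\Phi_N$ on the compact set $\overline{\Omega\times\mathcal{U}_N}$) plus the uniform continuity of $\mathcal{J}$; you have simply spelled out the compactness--minimizing-sequence--Weierstrass details that the paper leaves implicit. Your closing remark about the topology is well taken --- the hypothesis must indeed be read as uniform continuity with respect to the $C^0$ norm, since the Lemma only furnishes sup-norm convergence of $\phi_{\E^{(k)}}$ --- but this does not change the substance of the argument.
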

\begin{proof}
The previous lemma and the uniform continuity of~$\mathcal{J}$ imply the existence of at least one minimum.
\end{proof}
\begin{proposition}
\label{prop:decreasing}
Under the hypotheses of \Cref{thm:existence}, $m_N$ is a non-increasing function of $N$. 
Moreover, if there exists $N$ such that $m_{N+1}=m_{N}$, then $m_{N+n}=m_{N}$ for all $n\geq 1$.
\end{proposition}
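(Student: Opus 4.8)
The plan is to treat the two assertions separately: monotonicity follows from an explicit embedding of the feasible sets, while the stabilization rests on an induction built around a pointwise-minimum identity. Throughout I would work at the level of achievable activation maps, writing $\mathcal{A}_N:=\{\phi_\E:\E\in\overline{\mathcal{U}_N}\}$, so that $m_N=\min_{\phi\in\mathcal{A}_N}\mathcal{J}(\phi)$.

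For the monotonicity $m_{N+1}\le m_N$, I would show $\mathcal{A}_N\subseteq\mathcal{A}_{N+1}$. Given $\E=\{(\vx_i,t_i)\}_{i=1}^N\in\overline{\mathcal{U}_N}$, define $\E'\in\overline{\mathcal{U}_{N+1}}$ by duplicating one pair, say $(\vx_{N+1},t_{N+1}):=(\vx_1,t_1)$. By the representation~\eqref{eq:mineiko} the duplicated term never changes the minimum, so $\phi_{\E'}=\phi_\E$ and therefore $\mathcal{J}(\phi_{\E'})=\mathcal{J}(\phi_\E)$. Since every $N$-site objective value is thus realized by an $(N+1)$-site configuration, and both minima are attained by \Cref{thm:existence}, we obtain $m_{N+1}\le m_N$.

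For the stabilization it suffices, by induction, to prove the single implication $m_{k+1}=m_k\Rightarrow m_{k+2}=m_{k+1}$ for arbitrary $k$; applying it with $k=N$, then $k=N+1$, and so forth yields $m_{N+n}=m_N$ for all $n\ge1$. Monotonicity already gives $m_{k+2}\le m_{k+1}$, so the content is the reverse inequality. I would fix a minimizer $\E=\{(\vx_i,t_i)\}_{i=1}^{k+2}$ of the $(k+2)$-problem and, for each index $j$, delete the $j$-th site to obtain $\E^{(j)}\in\overline{\mathcal{U}_{k+1}}$. The key structural identity is $\phi_\E=\min_{j}\phi_{\E^{(j)}}$ pointwise: at every $\vx$ some site attains the minimum in~\eqref{eq:mineiko}, and deleting any \emph{other} site leaves that minimum unchanged, so $\phi_{\E^{(j)}}(\vx)=\phi_\E(\vx)$ for a suitable $j$; since always $\phi_{\E^{(j)}}\ge\phi_\E$ (a minimum over fewer terms is larger), taking the minimum over $j$ recovers $\phi_\E$. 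The induction hypothesis then gives $\mathcal{J}(\phi_{\E^{(j)}})\ge m_{k+1}$ for every $j$.

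The step I expect to be the main obstacle is the final comparison: concluding $\mathcal{J}(\phi_\E)\ge m_{k+1}$ from $\phi_\E=\min_j\phi_{\E^{(j)}}$ together with $\mathcal{J}(\phi_{\E^{(j)}})\ge m_{k+1}$. This does not follow from uniform continuity alone, because the objective of a pointwise minimum can be strictly smaller than the minimum of the individual objectives — intuitively, several individually-suboptimal sites may combine into a strictly better fit. To close the argument I would therefore need an order/monotonicity property of $\mathcal{J}$ with respect to the pointwise ordering of activation maps, forcing $\mathcal{J}(\min_j\phi_{\E^{(j)}})\ge\min_j\mathcal{J}(\phi_{\E^{(j)}})$. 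For a generic uniformly continuous objective (including the plain least-squares mismatch) the marginal gains $m_N-m_{N+1}$ need not be monotone, so I anticipate that either an additional structural hypothesis on $\mathcal{J}$ or an argument exploiting the specific eikonal geometry will be required precisely at this point.
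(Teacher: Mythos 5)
Your proof of the first claim is correct and coincides with the paper's argument: duplicating one pair embeds every $N$-site activation map into the $(N{+}1)$-site family via~\eqref{eq:mineiko}, hence $m_{N+1}\le m_N$. For the second claim, however, your argument is incomplete at exactly the point you flag, and the obstruction you identify is genuine rather than a removable technicality. The decomposition $\phi_\E=\min_j\phi_{\E^{(j)}}$ is valid, but nothing in the hypotheses lets you compare $\mathcal{J}(\min_j\phi_{\E^{(j)}})$ with $\min_j\mathcal{J}(\phi_{\E^{(j)}})$, so the induction does not close. The deeper issue is that $m_{N+1}=m_N$ is a statement about \emph{global} minima only: it does not bound the gain obtainable by appending a site to a suboptimal $(N{+}1)$-configuration, and it is precisely such configurations that control $m_{N+2}$.

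The paper's own proof takes a different route --- it assumes $m_{N+1}=m_N$ together with $m_{N+2}<m_{N+1}$ and claims a contradiction from the choice $\vx_{N+1}=\vx_{N+2}$, $t_{N+1}=t_{N+2}$ --- but that choice only shows that the $(N{+}2)$-problem restricted to the diagonal has minimum $m_{N+1}$, i.e.\ it re-derives the monotonicity $m_{N+2}\le m_{N+1}$ and does not exclude the strict inequality. In fact, for objectives of the form $\mathcal{J}(\phi)=F(\phi(\vp_1),\dots,\phi(\vp_M))$ with $F$ Lipschitz, one can take $F\equiv 1$ on the compact set of value vectors reachable with two sites and $F=0$ at a vector reachable only with three (such vectors exist because an $N$-site solution has at most $N$ strict interior local minima, located at the sites), which yields $m_1=m_2>m_3$. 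So the stabilization property requires structural assumptions on $\mathcal{J}$ beyond uniform continuity --- for instance a monotonicity with respect to the pointwise order of activation maps, as you suggest, or restriction to the specific least-squares mismatch. Your diagnosis of where and why the argument breaks is accurate; be aware that the paper's one-line contradiction does not supply the missing ingredient either.
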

\begin{proof}
The first claim immediately follows from the definition of~$m_N$ and set inclusion arguments.
To prove the second claim, we assume that
$m_{N+1}=m_N$ for some $N$ and $m_{N+2} < m_{N+1}$.
However, the choice $\vx_{N+1}=\vx_{N+2}$ and $t_{N+1}=t_{N+2}$ results in a contradiction.
\end{proof}
\begin{corollary}
If $N$ is bounded from above by $N_{\max}$, then $\min_{N\leq N_{\max}}m_N$ has at least one minimum.
\end{corollary}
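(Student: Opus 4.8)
The plan is to reduce the mixed minimization over the discrete parameter $N$ and the continuous configuration $\E$ to two facts already established. First I would observe that minimizing over $N\leq N_{\max}$ amounts to selecting the smallest element of the finite collection of real numbers $\{m_1,\ldots,m_{N_{\max}}\}$, each of which is a genuine minimum (not merely an infimum) by \Cref{thm:existence}. Since a finite set of reals always attains its smallest element, there exists an index $N^\ast\in\llbracket N_{\max}\rrbracket$ with $m_{N^\ast}=\min_{N\leq N_{\max}}m_N$.

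Next I would invoke \Cref{prop:decreasing} to pin down where this minimum occurs: because $m_N$ is non-increasing in $N$, the smallest value over $N\leq N_{\max}$ is simply $m_{N_{\max}}$, so one may take $N^\ast=N_{\max}$. Either way, applying \Cref{thm:existence} with this fixed $N^\ast$ produces a configuration $\E^\ast\in\overline{\mathcal{U}_{N^\ast}}$ attaining $\mathcal{J}(\phi_{\E^\ast})=m_{N^\ast}$. By construction this $\E^\ast$ realizes the combined minimum $\min_{N\leq N_{\max}}m_N$, which proves the claim.

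There is no substantial obstacle here, as the statement is a direct corollary: the only two ingredients are the attainment of each inner minimum (from \Cref{thm:existence}, which in turn rests on the compactness of $\overline{\Omega\times\mathcal{U}_N}$ together with the uniform continuity of $\mathcal{J}$) and the trivial attainment of the outer minimum over the finite index set $\llbracket N_{\max}\rrbracket$. The monotonicity of \Cref{prop:decreasing} is not strictly necessary but simplifies the bookkeeping by localizing the minimizer at $N_{\max}$; the only point worth handling with care is to keep the distinction between \emph{infimum} and \emph{attained minimum} explicit throughout, since the whole purpose of the corollary is to assert that the value is achieved by an actual EAS set rather than merely approached.
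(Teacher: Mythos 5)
Your argument is correct and matches the paper's intent: the corollary is stated without proof precisely because it follows immediately from \Cref{thm:existence} (each $m_N$ is attained) together with the trivial fact that a finite set of reals attains its minimum, with \Cref{prop:decreasing} optionally localizing the minimizer at $N_{\max}$. Nothing further is needed.
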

\begin{remark}
    \label{rmk:bounded}
\begin{enumerate}
\item 
From a practical point of view, this corollary ensures that by adding new EASs, we either improve the objective function or we keep the same level of accuracy.  
This is also seen in the experiments in \Cref{sec:experiments}, where coalescence of two or more sites is observed if introducing too many EASs.
\item
The minimum in~\eqref{eq:initial_opt_problem} is in general not unique as it depends on the choice of $\mathcal{J}$ and on the order of the EASs.
In principle, by permuting EASs we obtain the same value of the minimum.
In particular, this symmetry induces a periodic partition of the set $\mathcal{U}_N$.
Each partition is associated with a specific choice of the order of the EASs.
From a numerical point of view, this may constitute a problem for methods based on random sampling. For deterministic steepest descent algorithms, the problem is mitigated by the fact that we rarely cross the boundary between two partitions, e.g., by swapping points, unless the two points coincide. 
\item
In general, we cannot take $N$ unbounded with no further hypotheses on~$\mathcal{J}$.
Suppose for instance that $\mathcal{J}$ is minimized by $\phi(\vx)=c$ for some constant $c\in\R$.
Then, $\inf_{N\in\N}m_N$ attains no minimum.
Indeed, we cannot represent a constant function with \eqref{eq:mineiko} if $\E$ is only countable.
However, we can approximate the constant with arbitrary precision with a sufficiently large number~$N$ of EASs.
\end{enumerate}
\end{remark}

\subsection{Exponential Map}
\label{sec:geodesic_extraction}
In what follows, we compute the Riemannian exponential map to derive an expression for the variation of the distance function.
In particular, we discuss the relation of the derivatives of $\Phi_N$ and the geodesic path.

We briefly recall fundamental concepts in Riemannian geometry.
Given $\vx\in\Omega$ and a tangent vector~$\mathbf{v}\in\mathcal{V}$ for a sufficiently small neighborhood $\mathcal{V}$ around the origin of the tangent space at~$\vx$,
the exponential map $\operatorname{Exp}_{\vx}:\mathcal{V}\to \operatorname{Exp}_\vx (\mathcal{V}) \subset \Omega$ is given by $\operatorname{Exp}_{\vx}(\mathbf{v})=\gamma(1)$,
where $\gamma\in C^{0,1}([0,1],\Omega)$ is a geodesic path with $\mathbf{v}\coloneqq\dot{\gamma}(0)$ and $\operatorname{Exp}_\vx (\mathcal{V})$ is the corresponding image domain.
The logarithmic map $\operatorname{Log}_\vx:\Omega \supset \operatorname{Exp}_\vx (\mathcal{V}) \to\mathcal{V}$ is the inverse of the exponential map $\operatorname{Exp}_{\vx}^{-1}$.
In other words, the logarithmic map of $\vy\in\Omega$ identifies the tangent vector $\dot{\gamma}(0)$ of a geodesic path~$\gamma$ emanating from $\vx$ and ending at $\vy$.
\begin{proposition}[Variation of the distance function]\label{prop:deriv_dist}
Let $\vx,\vy\in\mathcal{X}$, where $\mathcal{X}\subset\mathring{\Omega}$ is sufficiently small such that all points inside are connected by unique geodesics.
Then the variation of $\delta(\vx,\vy)$ with respect to $\vy$ with $\mathbf{w}=\operatorname{Log}_{\vx}(\vy)$ reads as
\begin{equation}
\nabla_{\vx}\delta(\vx,\vy)=-\frac{\tD^{-1}(\vx)\mathbf{w}}{\Vert\mathbf{w}\Vert_{\tD^{-1}(\vx)}}.
\label{eq:source_gradient_wo_residuals}
\end{equation}
\end{proposition}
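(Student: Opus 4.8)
The plan is to derive \eqref{eq:source_gradient_wo_residuals} from the classical first variation of arc length, using that a length-minimizing geodesic is a critical point of the length functional $L$ under variations fixing its endpoints, so that moving only the endpoint $\vx$ leaves a single boundary contribution. The neighborhood hypothesis on $\mathcal{X}$ (unique geodesics between any two of its points) is exactly what guarantees that $\delta(\cdot,\vy)$ is differentiable at $\vx$: it rules out cut and conjugate points, where the distance function would fail to be smooth. Since $\tD\in C^1$, the geodesic equation upgrades the regularity of the minimizer $\gamma$ realizing \eqref{eq:distance} beyond the Lipschitz bound, which is the regularity the first variation requires.

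First I would fix the unique minimizing geodesic $\gamma\colon[0,1]\to\mathcal{X}$ with $\gamma(0)=\vx$, $\gamma(1)=\vy$, and initial velocity $\dot\gamma(0)=\mathbf{w}=\operatorname{Log}_{\vx}(\vy)$. Geodesics of the metric \eqref{eq:RiemannianMetric} have constant speed, and that speed equals the total length, so $\norm{\mathbf{w}}_{\tD^{-1}(\vx)}=L(\gamma)=\delta(\vx,\vy)$; hence the Riemannian unit tangent at $\vx$ is $\mathbf{u}\coloneqq\mathbf{w}/\norm{\mathbf{w}}_{\tD^{-1}(\vx)}$. I would then introduce a smooth family of source points $\vx(s)$ with $\vx(0)=\vx$ and $\frac{\diff{}}{\diff{s}}\vx(s)\big|_{s=0}=\mathbf{v}$, together with the associated family of minimizing geodesics $\gamma_s$ from $\vx(s)$ to $\vy$, so that $s\mapsto\delta(\vx(s),\vy)=L(\gamma_s)$ is the quantity to be differentiated.

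Next I would differentiate $L(\gamma_s)$ at $s=0$ via the first variation formula. The essential cancellation is that the interior integral term — which collects the position dependence of $\tD^{-1}(\gamma(t))$ through the covariant derivative of the unit tangent — vanishes identically because $\gamma$ solves the geodesic equation. Only the boundary term $[\langle V,\mathbf{T}\rangle_{\gamma(t)}]_{0}^{1}$ remains, where $V$ is the variation field and $\mathbf{T}$ the Riemannian unit tangent; keeping $\vy$ fixed forces $V(1)=\mathbf{0}$ while $V(0)=\mathbf{v}$, so that
\begin{equation*}
\frac{\diff{}}{\diff{s}}\,\delta(\vx(s),\vy)\Big|_{s=0}
=-\langle\mathbf{v},\mathbf{u}\rangle_{\vx}
=-\tD^{-1}(\vx)\mathbf{u}\cdot\mathbf{v}.
\end{equation*}
Reading off the Euclidean gradient from $\frac{\diff{}}{\diff{s}}\delta=\nabla_{\vx}\delta\cdot\mathbf{v}$ for arbitrary $\mathbf{v}$ gives $\nabla_{\vx}\delta(\vx,\vy)=-\tD^{-1}(\vx)\mathbf{u}$, and substituting $\mathbf{u}=\mathbf{w}/\norm{\mathbf{w}}_{\tD^{-1}(\vx)}$ yields precisely \eqref{eq:source_gradient_wo_residuals}. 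As a consistency check, this expression satisfies $\norm{\nabla_{\vx}\delta}_{\tD(\vx)}=1$, matching the eikonal equation \eqref{eq:eikonal} for the distance function.

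The main obstacle I expect is the rigorous justification of the first variation step rather than the algebra: one must verify that $\delta$ is genuinely differentiable at $(\vx,\vy)$ — which is where the uniqueness-of-geodesics hypothesis on $\mathcal{X}$ does the real work by excluding cut and conjugate points — and that the family $\gamma_s$ depends regularly enough on $s$ to differentiate under the integral. One must then confirm that the spatial variation of the metric $\tD^{-1}(\gamma(t))$ contributes exactly the covariant-derivative integrand annihilated by the geodesic equation, with the sign conventions tracked carefully so that the boundary contribution at the \emph{moving} endpoint carries the minus sign appearing in \eqref{eq:source_gradient_wo_residuals}.
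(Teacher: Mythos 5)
Your proposal is correct and follows essentially the same route as the paper: apply the first variation of arc length, observe that the interior (covariant-derivative) term vanishes because $\gamma$ is a geodesic, and read off the gradient from the boundary contribution at the moving endpoint, with the uniqueness-of-geodesics hypothesis guaranteeing differentiability of $\delta$. Your added consistency check that $\Vert\nabla_{\vx}\delta\Vert_{\tD(\vx)}=1$ is a nice touch but does not change the argument.
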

\begin{proof}
Suppose that $\gamma$ is a geodesic with respect to the Riemannian metric in~\eqref{eq:RiemannianMetric} realizing the distance $\delta(\vy,\vx)$, i.e.~$\gamma(0)=\vy$, $\gamma(1)=\vx$ and
\[
\delta(\vx,\vy)=\int_0^1\Vert\dot{\gamma}(t)\Vert_{\tD^{-1}(\gamma(t))}\diff{t}.
\]
Let $\widetilde\gamma:[0,1]\times(-R,R)\to\mathring\Omega$ for small $R>0$ be a smooth variation of $\gamma$ such that $\widetilde\gamma(t,0)=\gamma(t)$ for all $t\in[0,1]$.
The first variation formula~\cite[Chapter~10]{ONeill} with $c=\Vert\dot{\gamma}(t)\Vert_{\tD^{-1}(\gamma(t))}$ for $t\in[0,1]$ implies
\begin{align}
\nabla_{\vx} \delta(\vx,\vy)(\widetilde\gamma)=\frac{1}{c}\Bigg(&-\int_0^1
\langle\ddot{\gamma}(t),\partial_2\widetilde\gamma(t,0)\rangle_{\gamma(t)}\diff{t}
-\sum_{i=1}^k\langle\Delta\dot{\gamma}(t_i),\partial_2\widetilde\gamma(t_i,0)\rangle_{\gamma(t_i)}\notag\\
&+\langle\dot\gamma(1),\partial_2\widetilde\gamma(1,0)\rangle_{\gamma(1)}-\langle\dot\gamma(0),\partial_2\widetilde\gamma(0,0)\rangle_{\gamma(0)}\Bigg).
\label{eq:FVF}
\end{align}
Here, $0<t_1<\cdots<t_k<1$ are possible discontinuities of the geodesic curve and
$\Delta \dot{\gamma}(t_i)=\dot{\gamma}(t_i^+)-\dot{\gamma}(t_i^-)$, where $\dot{\gamma}(t_i^-)$ and $\dot{\gamma}(t_i^+)$ denote the one-sided derivatives from the left and the right, respectively.
The derivative of $\widetilde\gamma$ with respect to the second argument is denoted by~$\partial_2\widetilde\gamma$.
Since $\gamma$ is assumed to be geodesic and smooth, the first two summands in~\eqref{eq:FVF} vanish.

By adjusting $\widetilde\gamma$ such that $\partial_2\widetilde\gamma(1,0)=0$ and observing that $\dot\gamma(0)=\operatorname{Log}_{\vx}(\vy)$ we have proven 
\[
\nabla_{\vx}\delta(\vx,\vy)(\widetilde\gamma)=-\frac{\tD^{-1}(\gamma(0))\dot{\gamma}(0)}{\Vert\dot{\gamma}(0)\Vert_{\tD^{-1}(\gamma(0))}}\cdot\partial_2\widetilde\gamma(0,0),
\]
which readily implies \eqref{eq:source_gradient_wo_residuals}.
\end{proof}
In \Cref{prop:deriv_dist}, we assumed uniqueness and smoothness of the geodesic curve, which is in general not ensured.
In practice, the influence of geodesics violating these assumptions is negligible and thus in \TheMethod only consider~\eqref{eq:source_gradient_wo_residuals} for all computations.

As before, let $\phi_\E$ be the solution of the eikonal equation with given $\E=\{(\vx_i,t_i)_{i=1}^N\}$.
This admits a natural definition of region of influences as 
\begin{equation}
    \mathcal{R}_i\coloneqq\{\vx\in\overline\Omega:\phi_\E(\vx)=t_i+\delta(\vx,\vx_i)\}.
    \label{eq:region_of_influence}
\end{equation}
Note that each point in the interior of $\mathcal{R}_i$ is thus assigned to a single EAS through means of the geodesic distance.
Furthermore, the derivatives of $\Phi_N$ with respect to $\vx_i$ and $t_i$ at $\vx\in\mathring{\mathcal{R}}_i$ read as
\begin{align*}
\nabla_{\vx_i}\Phi_N(\vx,\vx_1,\ldots,\vx_N,t_1,\ldots,t_N)&=\nabla_{\vx_i}\delta(\vx_i,\vx),\\
\partial_{t_i}\Phi_N(\vx,\vx_1,\ldots,\vx_N,t_1,\ldots,t_N)&=1,
\end{align*}
where we note that function is not differentiable on the boundary of the regions of interest.
To compute the exponential map, we solve for each $i=1,\ldots,N$ the following initial value problem
\begin{equation}
\begin{cases}
\dot{\gamma_i}(t)=-\tD(\gamma_i(t))\nabla\phi_{\{(\vx_i,t_i)\}}(\gamma_i(t)),\\
\gamma_i(0)=\vx
\end{cases}
\label{eq:ode_eikonal_geodesics}
\end{equation}
for $\vx\in\Omega$.
The regularity and boundedness of~$\tD$ and~$\phi_{\{(\vx_i,t_i)\}}$ already imply the existence of solutions.
Then, we define the piecewise geodesic path $\gamma$ as $\gamma(t)=\gamma_i(t)$ if $\gamma(t)\in\mathcal{R}_i$.
Furthermore, 
\begin{equation}
\overline t=\argmin_{t>0}\{\gamma(t)\in\overline{B}_\zeta(\vx_i)\text{ for }i=1,\ldots,N\}
\label{eq:ZetaT}    
\end{equation}
is finite due to the assumptions regarding~$\tD$ for a small $\zeta>0$.
The inclusion of the $\zeta$-balls essentially circumvent problems related to the non-differentiability of~$\gamma_i$ in the proximity of~$(\vx_i,t_i)$.

We note that by construction $\gamma$ is a unit-speed geodesic for the length functional~\eqref{eq:lengthFunctional}.
In this case, we define the exponential map in the direction $\overline t\dot{\gamma}(0)$ as $\operatorname{Exp}_{\vx}(\overline t\dot{\gamma}(0))=\gamma(\overline{t})$, and the logarithm $\operatorname{Log}_{\vx}(\gamma(\overline{t}))=\overline t\dot{\gamma}(0)$ as its inverse.
Note the the logarithm can efficiently be computed by tracking backward the geodesic from $\gamma(\overline{t})$ to $\vx$.
\begin{remark}
We note that points can belong to multiple regions of influence, at which the derivative of~$\phi_\E$ might not be defined.
However, due to the general functional-analytic setting the Lebesgue measure of these points is negligible.
\label{rmk:regions_of_influence_cut}
\end{remark}

\subsection{\TheMethod Algorithm}
\label{sec:geodesic_opt}
In this section, we introduce the \TheMethod Algorithm to solve~\eqref{eq:initial_opt_problem} using a gradient-based approach.
Here, we restrict to the specific functional
\begin{equation}
\mathcal{J}(\phi)\coloneqq\int_\Gamma\frac{1}{2}(\phi(\vx)-\widehat{\phi}(\vx))^2\diff{\vx},
\end{equation}
where $\Gamma\subset\Omega$ is a subdomain of~$\Omega$ with a positive Lebesgue measure and $\widehat{\phi}\in L^2(\Gamma,\R)$ is a fixed square-integrable function.
In numerical experiments, $\widehat{\phi}$ reflects the measurements on a known subdomain $\Gamma$, for which
the quadratic mismatch on $\Gamma$ between $\phi$ and $\widehat{\phi}$ with respect to the EASs is minimized.
Examples of $\Gamma$ include finite sets of points mimicking a contact recording map, the full endocardium/epicardium, or subregions of them.

According to \Cref{sec:intro_min_problem,sec:geodesic_extraction}, the optimization problem for $N=1$ simply reads
\begin{equation}
\min_{(\vx_1,t_1)\in\mathcal{U}_1}\int_{\Gamma}\frac{1}{2}(t_1+\delta(\vx_1,\vx)-\widehat{\phi}(\vx))^2\diff{\vx}.
\label{eq:opt_problem}
\end{equation}
To employ a gradient-based approach, we see that following \Cref{prop:deriv_dist} the gradient of $\mathcal{J}$ with respect to $\vx_1$ simply reads as
\begin{equation}
\nabla_{\vx_1}\mathcal{J}(\phi_{\{(\vx_1,t_1)\}})=-\int_{\Gamma}r(\vx,\vx_1,t_1)\frac{\tD^{-1}(\vx_1)\dot{\gamma}_{\vx_1\to\vx}(0)}{\norm{\dot{\gamma}_{\vx_1\to\vx}(0)}_{\tD^{-1}(\vx_1)}}\diff{\vx},
\label{eq:source_gradient}
\end{equation}
where $\gamma_{\vx_1\to\vx}(t)$ is the geodesic path from $\vx_1$ to $\vx$ and $r(\vx,\vx_1,t_1)=t_1+\delta(\vx_1,\vx)-\widehat{\phi}(\vx)$ is the residual. 
Optimizing multiple points simultaneously yields an average direction weighted by the residuals $r$ on $\Gamma$.
\Cref{fig:opt_procedure} depicts how the velocity field shown in \Cref{fig:geodesics} translates to a descent direction to optimize \eqref{eq:opt_problem}.

\begin{figure}[htb]
\centering
    \includegraphics{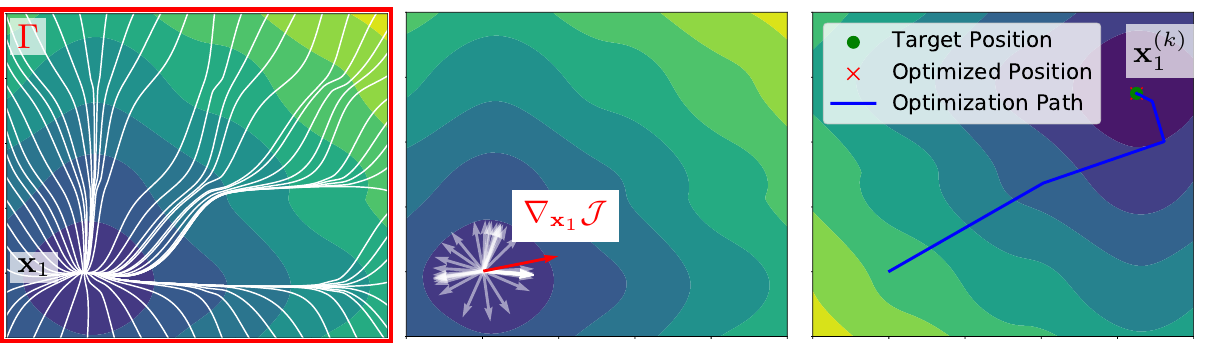}
\caption{Visualization of the optimization problem in \eqref{eq:opt_problem}. 
Geodesics (white) originating from the single EAS $\vx_1$ to distinct points on~$\Gamma$ (left)
and corresponding gradients computed with~\eqref{eq:source_gradient_wo_residuals} (middle).
The highlighted direction (red) coincides with the gradient in~\eqref{eq:source_gradient}.
Right: by iteratively applying a gradient-based scheme we determine the optimal~$(\vx_1,t_1)$.}
\label{fig:opt_procedure}
\end{figure}

The extension to multiple EASs works similarly.
A convenient formulation consists in splitting $\Gamma$ into subdomains $\Gamma_i\coloneqq\mathcal{R}_i\cap\Gamma$, each composed of those points activated by the EAS $\vx_i$ (note that the set of points belonging to multiple regions~$\Gamma_i$ has Lebesgue measure~$0$).
Then, the objective function reads as follows
\begin{equation}\label{eq:opt_problem_multi}
\min_{\{(\vx_i,t_i)_{i=1}^N\}\in\overline{\mathcal{U}_N}}\sum_{i=1}^N\int_{\Gamma_i}\frac{1}{2}(t_i+\delta(\vx_i,\vx)-\widehat{\phi}(\vx))^2\diff{\vx}.
\end{equation}
Clearly, the optimization procedure for a single EAS readily translates to the case of multiple sites.

We found that rather than a simple gradient descent scheme, a Gauss--Newton optimization proved beneficial to reduce the overall number of required optimization iterations, resulting in the following update rule
\begin{align}
\E^{(k+1)}=&\argmin_{\{(\vx_i,t_i)_{i=1}^N\}\in\overline{\mathcal{U}_N}}\notag\\
&\sum_{i=1}^N\frac{1}{2}\norm{\nabla_{\vx_i,t_i}\mathcal{J}(\phi_{\E^{(k)}})
\begin{pmatrix}
\vx_i-\vx_i^{(k)}\\
t_i-t_i^{(k)}
\end{pmatrix}
+\phi_{\E^{(k)}}(\vx)-\widehat{\phi}(\vx)}_{L^2(\Gamma)}^2.
\label{eq:updateEK}
\end{align}
Here, $\E^{(k)}=\{(\vx_i^{(k)},t_i^{(k)})_{i=1}^N\}$ are the solutions of the previous iteration.
To overcome local minima of the optimization problem~\eqref{eq:initial_opt_problem} caused by non-unique solutions (see \Cref{rmk:bounded})
we additionally use an over-relaxation~\cite{chambolle_introduction_2016,pock_inertial_2016} with fixed $\beta_a=\frac{1}{\sqrt{2}}$.
The resulting \Cref{alg:geasi_phi} iteratively linearizes and solves the problem using the computed gradient from $\delta$ to match a given measured activation.
We remark that the gradient properly reflects infinitesimal changes of activation times on $\mathcal{R}_i$ for each $\vx_i$, but it is not capable of accurately capturing
higher order effects like the change of $\mathcal{R}_i$. 
The experiments showed that rather than directly using $\E^{(k+1)}$ from \eqref{eq:updateEK} as the new solution, it is beneficial to take a step-size $\beta_s < 1$ and compute the convex combination of old and new solution according to this step size.
\label{rev:overrelaxation}
For all experiments, we used $\beta_s = \frac{1}{2}$.
For further details of the numerical realization we refer the reader to~\Cref{sec:discretization}.

\RestyleAlgo{boxruled}
\begin{algorithm}[htb]
 \SetKwInOut{Input}{Input}\SetKwInOut{Output}{Output}
 \Input{initial $\vx_i^{(0)}$ and $t_i^{(0)}$ defining $\E^{(0)}=\{(\vx_i^{(0)},t_i^{(0)})_{i=1}^N\}$,\\
        target activation $\widehat{\phi}(\vx)$ for $\vx\in\Gamma$, conduction velocity tensor $\tD$}
 \Output{optimal EASs $\vx_i^*$ and times $t_i^*$}
 \For{$k = 1, \ldots, K$}{
  $\widetilde{\E}^{(k)} = \E^{(k)} + \beta_a (\E^{(k)} - \E^{(k-1)})$ \\
  solve the eikonal equation \eqref{eq:eikonal} for~$\widetilde{\E}^{(k)}=\{(\widetilde{\vx}_i^{(k)},\widetilde{t}_i^{(k)})_{i=1}^N\}$\\
  compute all geodesics $\gamma_{\vx_i\to\vx}(t)$ for $\vx\in\Gamma$ by solving \eqref{eq:ode_eikonal_geodesics} \\
  compute $\bar{\E}^{(k+1)}$ using \eqref{eq:updateEK} (with $\widetilde{\E}^{(k)}$) \\
  $\E^{(k+1)} = \widetilde{\E}^{(k)} + \beta_s \left( \bar{\E}^{(k+1)} - \widetilde{\E}^{(k)} \right)$ \\
 }
 \caption{\TheMethod}
 
 \label{alg:geasi_phi}
\end{algorithm}

\section{Extensions of \TheMethod}
\label{sec:methods_extension}
\TheMethod is a versatile optimization algorithm, which can be extended in several aspects.
In this section, we focus on two such possible extensions. 
First, the topological gradient estimation allows for an accurate estimation of the number of EASs.
Second, we modify the original objective function of \TheMethod to fit a given ECG.

\subsection{Variable number of EASs: Topological Gradient}
\label{sec:topo_grad_method}
So far, we assumed the number of EASs $N$ to be fixed.
Since the optimal number of EASs is in general unknown, we subsequently propose a method to approximate the optimal~$N$.
As a possible approach to estimate~$N$ (which is not conducted in this work) one could start with a large number of sites and successively remove distinct EASs that violate the constraint~\eqref{eq:compatibilityCondition}.
However, this approach suffers from some major drawbacks:
\begin{itemize}
\item several local minima can occur leading to a strong dependency on the initial guess,
\item enforcing~\eqref{eq:compatibilityCondition} results in some numerical issues, e.g.~dimension changes of the optimization problem and order of EAS removal.
\end{itemize}
In contrast, starting with a few (or even a single) EASs and subsequently introducing new EASs overcomes the above issues
since according to \Cref{prop:decreasing} adding new sites does not increase the objective function.
In what follows, we briefly recall the topological gradient, which is used to compute the infinitesimal expansion of splitting a single EAS.
\label{rev:R1energy} This expansion is exploited to estimate the decrease in objective function of adding a new site.

Consider the case of a single EAS, i.e.~$N=1$.
The topological gradient is defined as the effect on the solution of the associated eikonal equation
if splitting a single EAS $\vx_1$ into two new sites $\vx_1+\varepsilon\mathbf{n}$ and $\vx_1-\varepsilon\mathbf{n}$ in the direction of $\mathbf{n}\in\sphere^{d-1}$.
We can directly infer from~\eqref{eq:mineiko} that
\begin{equation}\label{eq:topo_two_sites}
\phi_{\E_\varepsilon}(\vx)=\min\bigl\{t_1+\delta(\vx_1-\varepsilon\mathbf{n},\vx),t_1+\delta(\vx_1+\varepsilon\mathbf{n},\vx)\bigr\}
\end{equation}
for $\E_\varepsilon=\{(\vx_1+\varepsilon\mathbf{n},t_1),(\vx_1-\varepsilon\mathbf{n},t_1)\}$, where $\varepsilon>0$ is sufficiently small.
This topological operation divides the domain into two subdomains $\Omega_\varepsilon^-\coloneqq\{\vx\in\Omega:\delta(\vx_1-\varepsilon\mathbf{n},\vx)<\delta(\vx_1+\varepsilon\mathbf{n},\vx)\}$ and $\Omega_\varepsilon^+=\Omega\setminus\Omega_\varepsilon^-$.
We can now expand $\phi_\varepsilon$ with respect to $\varepsilon$ as follows
\begin{align*}
\phi_{\E_\varepsilon}(\vx)&=
t_1+\delta(\vx_1,\vx)+\varepsilon\min\{-\nabla_{\vx_1}\delta(\vx_1,\vx)\cdot\mathbf{n},
\nabla_{\vx_1}\delta(\vx_1,\vx)\cdot\mathbf{n} \} +o(\varepsilon)\\
&=\Phi_1(\vx,\vx_1,t_1)-\varepsilon\abs{\nabla_{\vx_1}\delta(\vx_1,\vx)\cdot\mathbf{n}}+o(\varepsilon),
\end{align*}
where $\Phi_1$ was defined in \eqref{eq:PhiNDefinition} and we used~\eqref{eq:source_gradient_wo_residuals}.
In this case, we call the quantity
\begin{equation}
j(\vx,\vx_1,\mathbf{n})\coloneqq-\abs{\nabla_{\vx_1}\delta(\vx_1,\vx)\cdot\mathbf{n}}
\label{eq:topo_deriv}
\end{equation}
the \emph{topological gradient}.

\begin{figure}[tb]
    \resizebox{\linewidth}{!}{
    \includegraphics{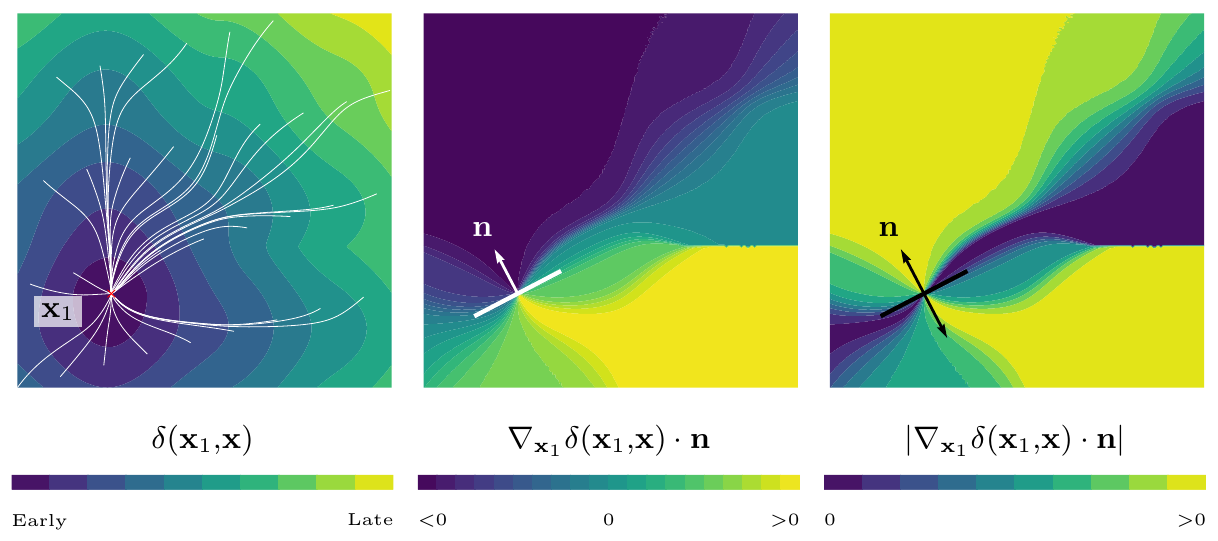}
    }
    \vspace{-.75cm}
    \caption{
    Left: geodesics (white) joining multiple points with~$\vx_1$.
    Contour plots of $\vx\mapsto\nabla_{\vx_1}\delta(\vx_1,\vx)\cdot\mathbf{n}$ (middle)
    and $\vx\mapsto \abs{\nabla_{\vx_1}\delta(\vx_1,\vx)\cdot\mathbf{n}}$ (right) for fixed $\mathbf{n}\in\sphere^{d-1}$.
    Moving a single EAS in the direction~$\mathbf{n}$ alters the activation times~$\delta(\vx_1,\vx)$ as shown.
    In contrast, splitting in the same direction~$\mathbf{n}$ is similar to simultaneously moving a source point in both directions, and keeping only the shorter geodesic (right).}
    \label{fig:asymptotic_changes}
\end{figure}

A visual example of the topological gradient is provided in \Cref{fig:asymptotic_changes}.
\label{rev:R2topo}
It is worth noting that the activation $\phi_{\E_\varepsilon}$ continuously depends on the splitting distance $\varepsilon$. 
Hence, the topological operation of splitting an EAS does not introduce any discontinuities in the objective function.
Moreover, we note that adding new optimal sites always decreases the objective functional unless $\nabla_{\vx_1}\delta(\vx_1,\vx)\cdot\mathbf{n} = 0$.
Therefore, we shall define a criterion for adding a split.
The decrease in objective function of splitting a single site can be estimated as follows:
\begin{equation}
\nu_{S,\varepsilon}\coloneqq\min_{\mathbf{n}\in\sphere^{d-1}}
\int_\Gamma r(\vx,\vx_1,t_1)^2
- \bigl(r(\vx,\vx_1,t_1) + \varepsilon j(\vx,\vx_1,\mathbf{n})\bigr)^2 \diff{\vx},
\label{eq:min_split}
\end{equation}
where $r(\vx,\vx_1,t_1) = \Phi_1(\vx,\vx_1,t_1)-\widehat{\phi}(\mathbf{x})$.
Likewise, the effect of moving a source point in direction~$\mathbf{n}$ is given by
\begin{equation}
\nu_{M,\varepsilon}\coloneqq\min_{\mathbf{n}\in\sphere^{d-1}}
\int_\Gamma r(\vx,\vx_1,t_1)^2
-\bigl( r(\vx,\vx_1,t_1) + \varepsilon\nabla_{\vx_1}\delta(\vx_1,\vx)\cdot\mathbf{n}\bigr)^2 \diff{\vx}.
\end{equation}
The ratio $\frac{\nu_{M,\varepsilon}}{\nu_{S,\varepsilon}}$ has proven to be a robust score for adding new sites, which is verified in the numerical experiments.
In particular, if the ratio is below a certain threshold, then a new EAS is introduced.

\subsection{Optimization using the ECG}
\label{sec:ecg_method}

The electrocardiogram (ECG) is the observed signature of the electric activity
of the heart, which is measured at selected locations on the chest.  Being
routinely acquired and non-invasive, the ECG is the ideal candidate for
inferring cardiac activation in a clinical framework.  Here, we will introduce
a method to reconstruct the EASs directly from ECG measurements.  To this end,
we exploit the methods presented
in~\cite{neic_efficient_2017,pezzuto_evaluation_2017} to efficiently compute
the ECG from activation maps of the eikonal equation.  Finally, the quadratic
mismatch of the computed and measured ECG is minimized, which yields optimal
EASs.

\begin{figure}[htb]
\centering
\includegraphics{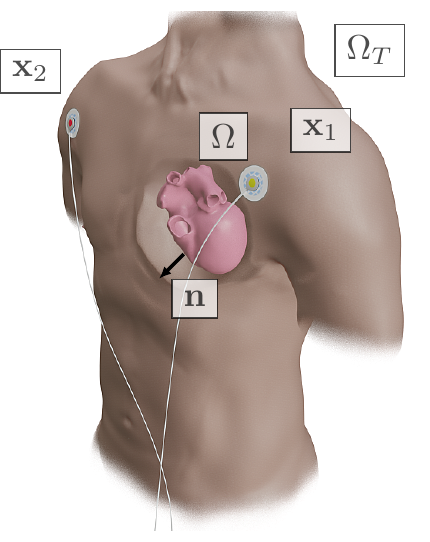}
\caption{Exemplary setup for the lead-field of Lead~I appearing in many ECG recordings.
$\Omega_T$ (torso) encapsulates heart domain $\Omega$ (heart domain), the left/right arm electrodes on $\Omega_T$ are marked as $\vx_{1}$ and $\vx_{2}$.
The normal $\mathbf{n}$ is pointing from the surface of the heart domain $\Omega$ into the torso domain $\Omega_T$.
The corresponding $Z_l$ is computed using~\eqref{eq:lead_field}, which is subsequently employed to obtain the ECG~$V_l$ of Lead~I.}
\label{fig:torso_lead_field_demo}
\end{figure}

From a modeling perspective, we denote by $\Omega_T\subset\R^d \setminus \overline\Omega$ the whole body domain excluding the heart cavity.
The heart-torso interface $\Gamma_H \coloneqq\overline{\Omega_T}\cap\overline\Omega$
is the boundary between the active myocardium and the rest of the body (for instance, endocardium plus epicardium),
whereas $\Sigma\coloneqq\partial\Omega_T\setminus\Gamma_H$ is the chest, on which the aforementioned electrical signal is recorded.
We denote by $\mathbb{T}\subset\R$ the considered time interval.
In \Cref{fig:torso_lead_field_demo}, we outline how this setup would look in actual clinical measurements on the example of Lead~I.

\label{rev:R1bidomain}
An equation for the torso potential can be derived from bidomain theory for the cardiac tissue and the balance of currents in the body (see e.g.~\cite{colli2014}).
Here, we consider the
so-called \emph{pseudo-bidomain}~\cite{bishop2011} or \emph{forward-bidomain} model~\cite{potse2006}, in which the parabolic and elliptic part of the bidomain equation
are decoupled and can be solved sequentially.  In this way, the transmembrane potential, denoted by $\Vm(\vx,t,\E)$, is not affected by the extracellular and torso potentials and can therefore be approximated independently.  The resulting system of equations reads as follows:
\begingroup
\arraycolsep=1.4pt\def\arraystretch{1.2}
\begin{equation}\label{eq:bidomain_torso}
\left\{
\begin{array}{rcll}
-\nabla\cdot(\mathbf{G}\nabla u_e)&=&\nabla\cdot(\mathbf{G}_i\nabla\Vm), &\text{in }\Omega\times\mathbb{T},\\
-\nabla\cdot(\mathbf{G}_T\nabla u_T)&=&0, &\text{in }\Omega_T\times\mathbb{T},\\
-\mathbf{G}_T\nabla u_T\cdot\mathbf{n}&=&0,&\text{in }\Sigma\times\mathbb{T},\\
u_e(\vx^-,t)&=&u_T(\vx^+,t),&(\vx,t)\in\Gamma_H\times\mathbb{T},\\
\mathbf{G}_T(\vx^+)\nabla u_T(\vx^+,t)\cdot\mathbf{n}\qquad&\\
-\mathbf{G}(\vx^-)\nabla u_e(\vx^-,t)\cdot\mathbf{n}&=&\mathbf{G}_i(\vx^-)\nabla\Vm(\vx^-,t)\cdot\mathbf{n},&(\vx,t)\in\Gamma_H\times\mathbb{T},
\end{array}
\right.
\end{equation}
\endgroup
where the following quantities occur:
\begin{itemize}[label=-]
    \setlength\itemsep{0em}
\item
$u_e(\cdot,\cdot,\E)\colon\Omega\times\mathbb{T}\to\R$ is the extracellular potential in the heart, parametrized through the set of EASs $\E$,
\item
$\Vm(\cdot,\cdot,\E)\colon\Omega\times\mathbb{T}\to\R$ is the transmembrane potential,
\item
$u_T(\cdot,\cdot,\E)\colon\Omega_T\times\mathbb{T}\to\R$ is the potential in the torso,
\item
$\mathbf{G}_T\colon\Omega_T\to\Sym{d}$ is the electric conductivity of the torso,
\item
$\mathbf{G}_i\colon\Omega\to\Sym{d}$ is the intracellular conductivity,
\item
$\mathbf{G}_e:\Omega\to\Sym{d}$ is the extracellular conductivity, and
\item
$\mathbf{G}=\mathbf{G}_i+\mathbf{G}_e$ is the bulk conductivity of the heart.
\end{itemize}
The normal vector $\mathbf{n}$ at $\vx\in\Gamma_H$ points outwards, i.e.~from the heart surface towards the torso,
and is the outer normal vector for $\vx\in\Sigma$.
The points $\vx^{\pm}$ associated with $\vx\in\Gamma_H$ are obtained by taking the limit $\vx^\pm_\varepsilon=\vx\pm\varepsilon\mathbf{n}$ for $\varepsilon\to0$.

The well-posedness of~\eqref{eq:bidomain_torso} follows from standard arguments for elliptic PDEs (see~\cite{Gilbarg01}).
However, some care is required for the discontinuity across the heart-torso interface $\Gamma_H$.
Indeed, in order to render~\eqref{eq:ecg_bidomain} meaningful, we need at least $u_T$ to be continuous on~$\Sigma$ for every $t\in \mathbb{T}$.
Let $\widetilde{\Omega}=\Omega\cup\Omega_T\cup\Gamma_H$ be the domain modeling the whole torso (including the heart), and
\[
\widetilde{\mathbf{G}}=
\begin{cases}
\mathbf{G},&\text{in }\Omega,\\
\mathbf{G}_T,&\text{in }\Omega_T,
\end{cases}
\qquad
\widetilde{u}=
\begin{cases}
u_e,&\text{in }\Omega\times\mathbb{T},\\
u_T,&\text{in }\Omega_T\times\mathbb{T}.
\end{cases}
\]
Following~\cite{colli2014}, we assume that
\begin{enumerate}
\item $\Omega,\Omega_T\subset\R^d$ are Lipschitz domains,
\item $\mathbf{G}_i,\mathbf{G}_e\in C^1(\Omega,\Sym{d})$ and $\widetilde{\mathbf{G}}\in L^\infty(\widetilde{\Omega},\Sym{d})$,
\item $\Vm(\cdot,t,\E)\in W^{2,p}(\Omega)$, with $p>d$,
for all $t\in\mathbb{T}$ and $\E$.
\end{enumerate}
\begin{proposition}
Under the above assumptions, the weak formulation of \eqref{eq:bidomain_torso} given by
\begin{equation}\label{eq:bidomain_torso_var}
\begin{split}
&\text{find }\widetilde{u}(\cdot,t,\E)\in H^1(\widetilde{\Omega})\text{ s.t. } \\
&\int_{\widetilde{\Omega}}\widetilde{\mathbf{G}}\nabla\widetilde{u}(\vx,t,\E)\cdot\nabla v\diff{\vx}
= -\int_\Omega\mathbf{G}_i\nabla\Vm(\vx,t,\E)\cdot \nabla v\diff{\vx},
\quad\forall v\in H^1(\widetilde{\Omega})
\end{split}
\end{equation}
is well-defined.
In particular, there exists a unique solution up to an additive constant (the reference potential).
\end{proposition}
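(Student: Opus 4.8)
The plan is to recognize the weak formulation \eqref{eq:bidomain_torso_var} as a pure-Neumann elliptic problem on the connected Lipschitz domain $\widetilde{\Omega}$ and to invoke the Lax--Milgram theorem on an appropriate quotient space. I would define the bilinear form $a(u,v)\coloneqq\int_{\widetilde{\Omega}}\widetilde{\mathbf{G}}\nabla u\cdot\nabla v\diff{\vx}$ and the linear functional $F(v)\coloneqq-\int_\Omega\mathbf{G}_i\nabla\Vm\cdot\nabla v\diff{\vx}$, so that \eqref{eq:bidomain_torso_var} reads $a(\widetilde{u},v)=F(v)$ for all $v\in H^1(\widetilde{\Omega})$. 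Since $a$ only sees gradients, every constant lies in its kernel; this is precisely the source of the ``up to an additive constant'' indeterminacy, so the natural functional space is the quotient $V\coloneqq H^1(\widetilde{\Omega})/\R$, equivalently the closed subspace of zero-mean functions equipped with $\norm{\nabla\cdot}_{L^2}$.

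First I would check that $F$ is well defined and continuous on $V$. Because $\Omega$ is bounded and $p>d\ge 2$, the embedding $W^{2,p}(\Omega)\hookrightarrow H^1(\Omega)$ gives $\nabla\Vm\in L^2(\Omega)$, whence $\abs{F(v)}\le\norm{\mathbf{G}_i}_{L^\infty}\norm{\nabla\Vm}_{L^2(\Omega)}\norm{\nabla v}_{L^2(\widetilde\Omega)}$; moreover $F$ annihilates constants and therefore descends to the quotient. Continuity of $a$ is immediate from $\widetilde{\mathbf{G}}\in L^\infty$ via Cauchy--Schwarz, with $\abs{a(u,v)}\le\norm{\widetilde{\mathbf{G}}}_{L^\infty}\norm{\nabla u}_{L^2}\norm{\nabla v}_{L^2}$.

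The key ingredient is coercivity on $V$. Using that $\widetilde{\mathbf{G}}(\vx)\in\Sym{d}$ for a.e.~$\vx$ together with the $L^\infty$ bound yields a uniform ellipticity constant $\lambda_*>0$ with $\widetilde{\mathbf{G}}(\vx)\succeq\lambda_*\Id$ almost everywhere, exactly as the analogous bound $\lambda_*\Id\prec\tD$ was exploited for the eikonal metric. Then $a(u,u)\ge\lambda_*\norm{\nabla u}_{L^2}^2$, and the Poincar\'e--Wirtinger inequality on the connected Lipschitz domain $\widetilde{\Omega}$ controls $\norm{u-\bar u}_{L^2}$ by $\norm{\nabla u}_{L^2}$, upgrading the gradient estimate to $a(u,u)\ge c\norm{u}_{V}^2$. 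With $a$ continuous and coercive and $F$ continuous on the Hilbert space $V$, Lax--Milgram delivers a unique $\widetilde{u}\in V$, that is, a solution of \eqref{eq:bidomain_torso_var} determined up to an additive constant, as claimed.

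The main obstacle I anticipate is the coercivity step, and specifically its two hidden hypotheses: that $\widetilde{\Omega}=\Omega\cup\Omega_T\cup\Gamma_H$ is \emph{connected} (so that the Poincar\'e--Wirtinger constant is finite and constants exhaust the kernel of $a$), and that the pointwise positive definiteness encoded in $\widetilde{\mathbf{G}}\in\Sym{d}$ actually furnishes a \emph{uniform} ellipticity lower bound rather than one degenerating across the interface $\Gamma_H$. The former is guaranteed by the heart-torso coupling through $\Gamma_H$; for the latter I would either strengthen the assumption on $\widetilde{\mathbf{G}}$ to essential uniform ellipticity or derive the bound from continuity of $\mathbf{G}_i,\mathbf{G}_e$ on the compact $\overline\Omega$ together with the corresponding bound for $\mathbf{G}_T$. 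The remaining verifications are routine and follow the standard elliptic template cited via~\cite{Gilbarg01}.
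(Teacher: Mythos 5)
Your proposal is correct and follows essentially the same route as the paper, which simply invokes the Lax--Milgram theorem using $\Vm(\cdot,t,\E)\in W^{2,p}(\Omega)$ to make the right-hand side a bounded functional; you have merely spelled out the standard details (quotient space $H^1(\widetilde{\Omega})/\R$, continuity, coercivity via Poincar\'e--Wirtinger). Your observation that uniform ellipticity of $\widetilde{\mathbf{G}}$ and connectedness of $\widetilde{\Omega}$ are implicitly needed is a fair and accurate reading of hypotheses the paper leaves tacit.
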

The proof directly follows from the Lax--Milgram theorem~\cite{Gilbarg01} by noting that
$\Vm(\cdot,t,\E)\in W^{2,p}(\Omega)$ and $\widetilde{u}(\cdot,t,\E)\in W^{1,p}(\widetilde{\Omega})$ for $p>d$ and all $t\in\mathbb{T}$.

Let $\vx_e\in\Sigma$, $e=1,\ldots,N_E$, be $N_E$ electrodes placed on the chest.
A single-lead ECG recording is the potential difference between two such electrodes or, more generally, a zero-sum linear combination of the recordings.
For instance, Einthoven's lead~I is the potential difference between left and right arm electrodes.
More generally, given electric potentials $u_T$ at the electrodes, the standard ECG is a vector-valued function $\mathbf{V}\colon\mathbb{T}\to\R^L$ given by 
\begin{equation}\label{eq:ecg_bidomain}
\mathbf{V}(t,\E)=
\begin{pmatrix}
V_1(t,\E) \\ V_2(t,\E) \\ \vdots \\ V_L(t,\E)
\end{pmatrix}=
\mathbf{A}
\begin{pmatrix}
u_T(\vx_1,t,\E) \\ u_T(\vx_2,t,\E) \\ \vdots \\ u_T(\vx_{N_E},t,\E)
\end{pmatrix},
\end{equation}
where $L$ is the number of leads and $\mathbf{A}$ is a $L\times N_E$ real matrix defining the lead system, e.g.~the 12-lead ECG.
Since each row of $\mathbf{A}$ sums to zero, the matrix is not full-rank.
For instance, the standard 12-lead ECG corresponds to the choice $L=12$ (3 Einthoven leads, 3 augmented limb leads and 6 precordial leads) and $N_E=9$ (3 limb electrodes and 6 precordial electrodes), in which case $\mathbf{A}$ has rank 8.
We remark that Morrey's inequality guarantees $\widetilde{u}(\cdot,t,\E)\in C^0(\widetilde{\Omega})$, hence validating~\eqref{eq:ecg_bidomain}.

Solving~\eqref{eq:bidomain_torso} is numerically costly for the standard 12-lead ECG, since we only evaluate~$u_T$ at selected locations.
Note that the system must be solved for every~$t\in\mathbb{T}$.
Thus, we adopt the following integral representation of~\eqref{eq:ecg_bidomain}:
\begin{equation}\label{eq:ecg_bidomain_int}
V_l(t,\E)=\int_\Omega\mathbf{G}_i(\vx)\nabla\Vm(\vx,t,\E)\cdot\nabla Z_l(\vx)\diff{\vx},
\end{equation}
where $Z_l\colon\widetilde{\Omega}\to\R$ are the lead fields (or Green's functions) satisfying the adjoint problem
\begin{equation}\label{eq:lead_field}
\left\{
\begin{array}{rcll}
-\nabla\cdot(\widetilde{\mathbf{G}}\nabla Z_l)&=&0, &\text{in $\Omega\cup\Omega_T$}, \\
-\widetilde{\mathbf{G}}\nabla Z_l(\vx)\cdot\mathbf{n} &=& \displaystyle \sum_{e=1}^{N_E}[\mathbf{A}]_{le}\delta_{[\vx-\vx_e]},&\vx\in\Sigma,\\
Z_l(\vx^-)&=&Z_l(\vx^+),&\vx\in\Gamma_H,\\
\mathbf{G}(\vx^+)\nabla Z_l(\vx^+)\cdot\mathbf{n}&=&\mathbf{G}(\vx^-)\nabla Z_l(\vx^-)\cdot\mathbf{n},&\vx\in\Gamma_H.
\end{array}
\right.
\end{equation}
An informal derivation of~\eqref{eq:ecg_bidomain_int} follows from the application of the second Green's identity to~\eqref{eq:bidomain_torso}.  As for~\eqref{eq:bidomain_torso},
the solution is defined up to a constant.
For a rigorous derivation accounting for the discontinuity in~$\widetilde{\mathbf{G}}$, we refer the reader to~\cite[pp.~152~ff.]{colli2014}.  A key observation is that the lead
fields do not depend on $t$ and $\E$, making~\eqref{eq:ecg_bidomain_int} particularly
attractive for parameter estimation.

Next,
we assign the transmembrane potential $\Vm$ accordingly to a fixed waveform $U\colon\R\to\R$ shifted by the activation time~$\phi_\E$ as follows
\[
\Vm(\vx,t,\E)=U\bigl(t-\phi_\E(\vx)\bigr).
\]
We write the parametrized waveform as:
\begin{equation}
U(\xi)=K_0+\frac{K_1-K_0}{2}\left[\tanh\left(2\frac{\xi}{\tau_1}\right)-\tanh\left(2\frac{\xi-\text{APD}}{\tau_2}\right)\right],
\label{eq:tanh_waveform}
\end{equation}
which is visualized in \Cref{fig:tanh_waveform}.

\begin{figure}[htb]
    \centering
    \includegraphics{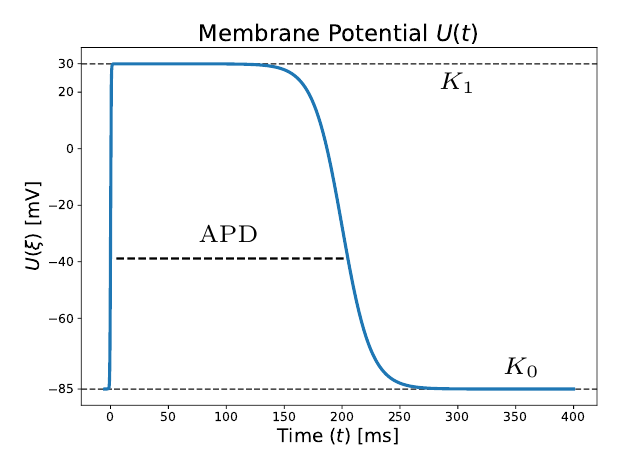}
    \caption{Membrane voltage waveform as a function of time, equivalent to \eqref{eq:tanh_waveform} with parameters from~\Cref{tab:tanh_waveform_parameters}. 
    The continuous formulation allows for an analytical derivation in~\eqref{eq:ecg_deriv}.}
    \label{fig:tanh_waveform}
\end{figure}
\begin{table}[htb]
    \centering
    \begin{tabular}{llll}
    \toprule
    Parameter & Description & Value & Unit\\
    \midrule
    $t$       & time & $\left[0, T\right]$ & \si{\ms} \\
    $\lambda$ & anisotropy ratio & 3 & --- \\
    $\mathbf{G}_T$ & torso conductivity & $0.2$ & \si{\milli\siemens\per\mm} \\
    $\alpha^2$ & conduction velocity scaling & 400 & \si{\square\mm\per\milli\siemens\per\square\ms} \\
    $\beta$   & surface-to-volume ratio & 100 & \si{\per\mm} \\
    $K_0$     & resting potential & $-85$ & \si{\milli\volt} \\
    $K_1$     & plateau potential & 30 & \si{\milli\volt} \\
    $\tau_1$  & depolarization time-scale & 1 & \si{ms} \\
    $\tau_2$  & repolarization time-scale & 50 & \si{ms} \\
    APD       & action potential duration & 200 & \si{ms} \\
    \bottomrule
    \end{tabular}
    \caption{Parameters to compute the ECG from the eikonal solution $\phi_{\E}$.}
    \label{tab:tanh_waveform_parameters}
\end{table}

\label{rev:R1aniso} 
Furthermore, the conduction velocity tensor $\tD$ in the anisotropic eikonal equation in~\eqref{eq:eikonal} is linked to the electric conductivity as follows:
\begin{equation}
    \tD = \frac{\alpha^2}{\beta}\mathbf{G}_e \mathbf{G}^{-1} \mathbf{G}_i,
    \label{eq:diff_tensor_cond_link}
\end{equation}
where $\beta$ is the surface-to-volume ratio and $\alpha$ is a rescaling factor either experimentally estimated or obtained by solving the monodomain equation in a cable propagation setup~\cite{pezzuto_evaluation_2017}.
Note that in all conducted experiments we assumed an equal anisotropy ratio $\mathbf{G}_i = \lambda \mathbf{G}_e$, from which $\tD = \frac{\alpha^2}{\beta} \frac{\lambda}{1 + \lambda}\mathbf{G}_i$ follows.
The equal anisotropy ratio assumption simplifies the numerical experiments, but is not necessary for \TheMethod.
All parameters adopted in this study are provided in \Cref{tab:tanh_waveform_parameters}. 

We emphasize that $\phi_\E\in C^{0,1}(\overline{\Omega})$ only implies $\Vm(\cdot,t,\E)\in W^{1,p}(\Omega)$ and not $\Vm(\cdot,t,\E)\in W^{2,p}(\Omega)$ as required for $t\in\mathbb{T}$ and $\E\in\mathcal{U}_N$.
However, the aforementioned theory is still valid in this case with some major modifications that are beyond the scope of this work.
Again, we refer to~\cite{colli2014} and the references therein for further details.

In what follows, we intend to compute the sensitivities of the ECG with respect to the parameter set $\E\in\mathcal{U}_N$.
In the problem, only the activation map~$\phi_\E$ appearing in the definition of $\Vm$ depends on the parameters in $\E$.
Note that the chain rule straightforwardly implies
\begin{equation*}
\nabla_\E \Vm=-\frac{\partial U}{\partial \xi}\nabla_\E\phi_\E.
\end{equation*}
The use of the aforementioned smooth waveform allows for a continuous analytical derivative $\frac{\partial U}{\partial\xi}$. 
Details on the derivation of the term $\nabla_\E\phi_\E$ were already given in \Cref{sec:geodesic_extraction}.
Then, the derivative $\nabla_\E V_l$ is computed from \eqref{eq:ecg_bidomain_int} and reads as
\begin{equation}
\nabla_\E V_l(t,\E)=\int_\Omega\left(\mathbf{G}_i(\vx)\nabla_{\E,\vx}^2 \Vm(\vx,t,\E)\right)\nabla Z_l(\vx)\diff{\vx}.
\label{eq:ecg_deriv}
\end{equation}
Finally, in this model the set of EASs~$\E$ is computed from the measured ECG $\widehat{V}_l:I \to\R$ as follows:
\begin{equation}
\min_{\E\in\overline{\mathcal{U}_N}}\frac{1}{2}\sum_{l=1}^L\int_{\mathbb{T}}\left(V_l(t,\E)-\widehat{V}_l(t)\right)^2\diff{t},
\label{eq:ecg_min}
\end{equation}
which is solved using the Gauss--Newton algorithm in a similar fashion to~\Cref{alg:geasi_phi}.
In particular, the update of the set~$\E$ reads as follows
\begin{align}
&\E^{(k+1)}=\argmin_{\{(\vx_i,t_i)_{i=1}^N\}\in\overline{\mathcal{U}_N}}\notag\\
&\sum_{l=1}^L\sum_{i=1}^N\frac{1}{2}\norm{\nabla_{\vx_i,t_i}\mathcal{J}(\E^{(k)})(\vx_i-\vx_i^{(k)},t_i-t_i^{(k)})^\top+V_l(t,\E^{(k)})-\widehat{V}(t)}_{L^2(\mathbb{T})}^2
\label{eq:EUpdate}
\end{align}
with the modified objective functional
\begin{equation}
\mathcal{J}(\E)=\frac{1}{2} \sum_{l=1}^L \int_{\mathbb{T}} \bigl( V_l(t,\E)-\widehat{V}_l(t)\bigr)^2\diff{t}.
\end{equation}
The numerical integration in~\eqref{eq:EUpdate} is realized using the trapezoidal rule.

\begin{remark}
There are several numerical issues related to the optimization:
\begin{enumerate}
\item 
The waveform~\eqref{eq:tanh_waveform} is a rough approximation of a physiological action potential modelled the electrophysiology of a cell.
The function~$U$ and the scaling parameter~$\alpha$ may be simultaneously approximated from a generic ionic model by solving a 1-D propagation in a
(possibly very long) cable with uniform coefficients.
Alternatively, it is possible to show that $(U,\alpha)$ solves a nonlinear eigenvalue problem involving the ionic model~\cite{keener1991eikonal}.
\item
\label{rev:R1repol}
\Cref{eq:tanh_waveform} is actually not suitable to model the repolarization of the heart which is responsible for the T-wave.
The reason is that the polarity of the T-wave, in general in accordance with the polarity of the QRS complex, can only arise from a heterogeneity in the action potential.
Such heterogeneity might be introduced here, but it would be hard to reproduce the smoothing effect due to diffusion currents.
Finally, the eikonal model is not suitable for the repolarization because,
opposed to the depolarization phase, the repolarization front is of the same order of the size of the domain, impeding a proper perturbation analysis.
In this work, the repolarization time is $\phi_\E(\vx)+\mathrm{APD}$, hence it satisfies the same equation as $\phi_\E$, but with a shifted time.
\item
\Cref{eq:ecg_deriv} requires higher order derivatives of $\Vm$ and subsequently $\phi_\E$. 
While we computed the derivative $\nabla_\E\Vm$ as previously discussed, the computation of $\nabla_\vx\Vm$ is numerically achieved on the reference element.
\item
It is important to mention that the gradient computation for the minimization of~\eqref{eq:ecg_min} is usually much more costly compared to optimizing the problem in the eikonal formulation from~\eqref{eq:opt_problem},
since the size of~$\Gamma$ is much smaller compared to~$\Omega$.
However, to compute $\nabla_{\vx_i,t_i}\mathcal{J}$ we need the activation times and their derivatives in~$\Omega$,
which necessitates the computation of the geodesics from each point of our domain to the EAS $\vx_i$.
The computational complexity is significantly larger than the complexity for~\eqref{eq:opt_problem}.
Further strategies to reduce additional computational costs are presented in \Cref{sec:runtime}.
\end{enumerate}
\end{remark}

\section{Discretization}\label{sec:discretization}
In this section, we elaborate on the discretization aspects for \Cref{alg:geasi_phi}, which encompasses the steps:
over-relaxation of $\E^{(k+1)}$, solving the eikonal equation, computation of the geodesics and update of $\E^{(k+1)}$.

\subsection{Solving the Eikonal Equation}\label{sec:eikonal}
The discrete function space for the eikonal equation is the space of volumetric Lagrange $\mathcal{P}^1$-finite elements defined on triangular ($d=2$) and tetrahedral ($d=3$) meshes discretizing~$\Omega$, respectively.
Moreover, the discrete measurements in~$\Gamma$ are degrees of freedom (DOFs) of the mesh.

Typically, finite element solvers require the initiation sites to coincide with DOFs of the mesh.
However, since the original problem~\eqref{eq:initial_opt_problem} expresses $\vx_i$ as a continuous quantity, we identify the DOFs of the actual element containing the activation site.
Then, these DOFs are added to the Dirichlet boundary~$\Gamma_D$ with fixed activation times given by $t_i+\norm{\vx-\vx_i}_{\tD^{-1}(\vx_i)}$ for $\vx\in\Gamma_D$
due to structural assumptions regarding the $\mathcal{P}^1$-finite element space.
For the rare case of two or more initiation sites residing in the same element, we use the properties of \eqref{eq:mineiko} and \eqref{eq:compatibilityCondition} to compute the activation times.

In all subsequent computations, we employ the FIM \cite{FIM_triangle,FIM_tetra} to solve the eikonal equation to account for the anisotropy.

\subsection{Computation of Geodesics}\label{sec:computationGeodesics}
In this work, we employ Heun's method (second order explicit Runge--Kutta scheme) to solve~\eqref{eq:ode_eikonal_geodesics}, which proved to be stable and efficient in numerical experiments.
Due to the convergence of the ODE system to a stable node~$\vx_i$ we terminate the iteration if the $\ell^2$-norm of two consecutive iterations is below $10^{-10}$.
In practice, the ODE system is solved independently on each region of interest~$\mathcal{R}_i$ incorporating the whole set~$\E$.
Since the gradient of the eikonal solution for the chosen discretization is a $\mathcal{P}^0$-finite element function (i.e.~piecewise constant), we advocate a standard $L^2$-projection onto the $\mathcal{P}^1$-finite element space~\cite{larson_finite_2013}.
Note that this projection can be realized by solving a linear system involving the mass matrix in~$\mathcal{P}^1$.
Since the boundary of~$\Omega$ is in general curved, we project the geodesics back onto $\partial\Omega$ if they are outside of the domain after each update.

As remarked in \Cref{sec:geodesic_extraction}, the gradient of the eikonal solution is discontinuous around each~$\vx_i$.
To enforce regular gradients at each $\vx_i$ after the $L^2$-projection of the previous eikonal solution~$\widetilde{\phi}_\E$, we recompute the points with vanishing gradient by the subsequent variational problem with Tikhonov regularization for $c\in\llbracket d\rrbracket$ and balancing parameter $\lambda>0$ as follows:
\begin{equation*}
\begin{pmatrix}
[\nabla\phi_\E(\vy_1)]_c\\
\vdots\\
[\nabla\phi_\E(\vy_{d+1})]_c\\
\end{pmatrix}
=\argmin_{\mathbf{n}\in\R^{d+1}}\frac{1}{2}\langle\mathbf{\Psi}(\vx_i),\mathbf{n}\rangle^2+\frac{\lambda}{2}\norm{\mathbf{n}-
\begin{pmatrix}
[\nabla\widetilde{\phi}_\E(\vy_1)]_c\\
\vdots\\
[\nabla\widetilde{\phi}_\E(\vy_{d+1})]_c\\
\end{pmatrix}
}^2.
\end{equation*}
Here, $\mathbf{\Psi}=(\psi_1,\ldots,\psi_{d+1})^\top$ and $\{\vy_j\}_{j=1}^{d+1}$ are the collections of
$\mathcal{P}^1$-basis functions and degrees of freedom associated with the element containing $\vx_i$, respectively.
\Cref{fig:x0_geodesic_accuracy} depicts the effect of this regularization on the solution around $\vx_i$.

\begin{figure}[htb]
\centering
\includegraphics{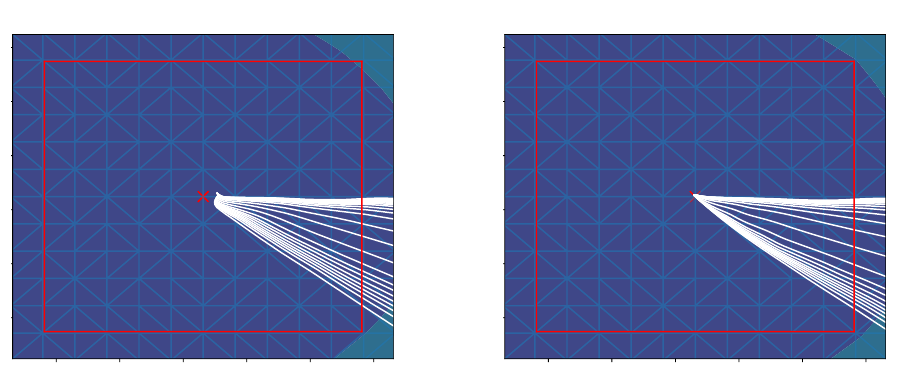}
\caption{Left: zoom of family of geodesics emanating from a single EAS without special handling.
Note that before the $L^2$-projection of $\nabla\phi_{\E}$ geodesics are not guaranteed to reach the EAS.
Right: after the $L^2$-projection all geodesic curves actually reach the EAS.}
\label{fig:x0_geodesic_accuracy}
\end{figure}
The gradient computation in~\eqref{eq:source_gradient_wo_residuals} is also sensitive to the choice of the step sizes of~\eqref{eq:ode_eikonal_geodesics}, which we choose as $5\cdot 10^{-2} h$ with $h$ being the average element size. 
As already described in \Cref{sec:geodesic_extraction}, we compute the geodesic direction not directly at $\vx_i$, but rather in a small $\zeta$-neighborhood with $\zeta=0.5h$ as advocated in~\eqref{eq:ZetaT}.
Numerically, the convergence of geodesics to this neighborhood is not ensured and non-converged geodesics (rarely occurring) do not affect the optimization.

\subsection{Update of $\E^{(k+1)}$}
\label{sec:update_ek}
Next, we optimize~\eqref{eq:updateEK}, where we have to ensure $\E\in\overline{\mathcal{U}_N}$.
The constraint $\vx_i\in\overline{\Omega}$ is mesh-dependant and allows for no general analytical solution, potentially limiting the available optimization implementations.
To overcome this hurdle, we use a proximal point algorithm enforcing $\E\in\overline{\mathcal{U}_N}$.
The integration is realized using an exact simplex quadrature rule.

In detail, we first compute the Moreau envelope of~\eqref{eq:updateEK} with respect to the metric induced by
$\mathbf{M}_{i,\E^{(k)}}\coloneqq\frac{1}{\tau}\Id-\mathbf{J}_{i,\E^{(k)}}^\top\mathbf{J}_{i,\E^{(k)}}$ for $\tau < \Vert\mathbf{J}_{i,\E^{(k)}}^\top\mathbf{J}_{i,\E^{(k)}}\Vert^{-1}$,
where $\mathbf{J}_{i,\E^{(k)}}\coloneqq\nabla_{(\vx_i,t_i)}\mathcal{J}(\vx_i^{(k)},t_i^{(k)})$.
Thus, the Moreau envelope reads as
\begin{equation}
f(\overline{\E})\coloneqq\min_{\E\in\overline{\mathcal{U}_N}}
\sum_{i=1}^N\frac{1}{2}\!
\norm{\mathbf{J}_{i,\E^{(k)}}\!
\begin{pmatrix}
\vx_i-\vx_i^{(k)}\\
t_i-t_i^{(k)}
\end{pmatrix}\!
+\mathbf{r}_{\E^{(k)}}(\vx)}_{L^2(\Gamma)}^2\!+\!
\frac{1}{2}\!\norm{
\begin{pmatrix}
\overline{\vx}_i-\vx_i\\
\overline{t}_i-t_i
\end{pmatrix}    
}_{\mathbf{M}_{i,\E^{(k)}}}^2\!
\label{eq:moreau_envelope}
\end{equation}
with $\E=\{(\vx_i,t_i)_{i=1}^N\}$ and $\overline{\E}=\{(\overline{\vx}_i,\overline{t}_i)_{i=1}^N\}$, and $\mathbf{r}_{\E^{(k)}}(\vx)=\phi_{\E^{(k)}}(\vx)-\widehat{\phi}(\vx)$.
This particular choice of the metric~\cite{chambolle_introduction_2016} allows for an explicit solution to~\eqref{eq:moreau_envelope} given the projection onto~$\mathcal{U}_N$.
This method is usually referred to as Iterative Shrinkage and Thresholding (ISTA, \cite{donoho_denoising_1995}).
In summary, the iteration step of the proximal point algorithm reads as
\begin{equation*}
\widehat{\E}= \left\{\left(\proj_{\mathcal{U}_1}\left(
\begin{pmatrix}
\overline{\vx}_i\\
\overline{t}_i
\end{pmatrix}    
-\int_\Gamma\tau\mathbf{J}_{i,\E^{(k)}}^{\top}\left(\mathbf{J}_{i,\E^{(k)}}
\begin{pmatrix}
\overline{\vx}_i-\vx_i^{(k)}\\
\overline{t}_i-t_i^{(k)}
\end{pmatrix} 
+\mathbf{r}_{\E^{(k)}}(\vx)\right)\diff{\vx}\right)\right)_{i=1}^N\right\}.
\end{equation*}
Thus, the resulting optimal set is $\widehat{\E}=\{(\widehat{\vx}_i,\widehat{t}_i)_{i=1}^N\}$.
Note that the convexity of the projection depends on the convexity of the domain.
In practice, hardly any cardiac mesh is convex, but nevertheless the proposed method generated reliable results for sufficiently small step sizes.
The gradient direction of the Moreau-envelope is
\begin{equation*}
\nabla f_{\vx_i,t_i}(\overline{\E})=\tau^{-1}
\begin{pmatrix}
\overline{\vx}_i-\widehat{\vx}_i\\
\overline{t}_i-\widehat{t}_i
\end{pmatrix} 
\end{equation*}
In this case, the unconstrained problem is solved using L-BFGS~\cite{BFGS}.

\section{Numerical Results}\label{sec:experiments}
Next, we present numerical results for various methods discussed above, where we focus on four setups to test \TheMethod on theoretical and cardiac problems:
\begin{enumerate}
\item[\fbox{$\mathbf{1}$}] The square domain presented in \Cref{fig:geodesics} with a periodic conduction velocity field, where the measurement domain $\Gamma$ coincides with the boundary of the surface.
                            The initial initiation sites were chosen randomly for optimization w.r.t.~activation times.
                            For the optimization w.r.t.~the ECG, we moved the target EASs further apart and used perturbations of the target EAS positions as initializations.
\item[\fbox{$\mathbf{2}$}] On a simplified 2D left ventricle (LV)-slice geometry with a transmural fiber rotation. 
Fiber and transverse intracellular conductivities were set to achieve
a conduction velocity of
\SIlist[list-units=single]{0.6;0.4}{\mm/\ms}, respectively.
The measurement domain~$\Gamma$ is the outer ring of the domain, i.e.~an epicardial slice.
The initial initiation sites were chosen randomly for all related experiments.
\item[\fbox{$\mathbf{3}$}] \label{rev:R5clinic} On a clinically sampled, endocardial electrical mapping, recorded during intrinsic rhythm in a patient candidate to cardiac resynchronization therapy (CRT) and affected by a left bundle branch block. 
Data acquisition and the construction of the patient-specific anatomical model has been described in previous studies~\cite{pezzuto_evaluation_2017,Pezzuto2021ECG}.
The measurements of activation were projected onto a patient-specific LV heart geometry.
We mapped fiber orientations into the model using the approach described in~\cite{Bayer12:_ld}. 
Fiber, transverse and cross conduction velocities were set
to \SIlist[list-units=single]{0.6;0.4;0.2}{\mm/\ms},
respectively.
Again, the initial initiation sites were chosen randomly, the target initiation sites are unknown.
\item[\fbox{$\mathbf{4}$}] A full biventricular, trifascicular LV/RV human heart geometry with $1000$ measurement points $\Gamma$ distributed evenly along the epicardium. 
Details on the model building process have been reported previously in~\cite{augustin_anatomically_2016}.
Conduction velocities and fibers were assigned as in the above experiment, with
an additional fast-conducting isotropic endocardial layer with a propagation
velocity of \SI{1.5}{\mm/\ms}.
As in almost all other experiments, the initial initiation sites were chosen randomly.
The target EASs, three in the left ventricle and three in the right ventricle,
were chosen in accordance with our previous study, see~\cite[p.~10]{grandits_inverse_2020}.
%
\end{enumerate}
\label{rev:R1cv} In all experiments involving the computation of the ECG and lead fields,
we assumed that intra- and extra-cellular conductivities are proportional to
the tensor $\tD$, as explained in \Cref{sec:ecg_method}.
Further numerical specifications of the aforementioned setups are listed in \Cref{tab:setups}
and \Cref{fig:exp_setups}.
\begin{table}[htb]
    \small
\begin{tabular}{|c|c|c|c|c|c|c|}
\toprule
 & \multirow{2}{*}{DOFs} & \multirow{2}{*}{Size [cm$^d$]} & $h$ & \multirow{2}{*}{Runtime [h]} & Topological & \multirow{2}{*}{ECG}\\
 & & & [mm] & & gradient & \\
\midrule
\fbox{$\mathbf{1}$} & $50^2$ & $2 \cdot 2$ & $0.4$ & 1/2 (ECG) & \checkmark & \checkmark \\
\fbox{$\mathbf{2}$} & $7980$ & $2 \cdot 2$ & $0.11$ & 1/2 (ECG) & \checkmark & \checkmark \\
\fbox{$\mathbf{3}$} & $1.5 \cdot 10^{4}$ & $10.7 \cdot 8.9 \cdot 9.5$  & $1.9$  & 2.5 & \checkmark &  \\
\fbox{$\mathbf{4}$} & $1.08 \cdot 10^{5}$ & $10.3 \cdot 8.1 \cdot 12.6$ & $0.66$  & 3.5/18 (ECG) & & \checkmark\\[.35em]
\hline
\end{tabular}
\caption{Selected parameters for each setup.
The size refers to the bounding box of the setups and $h$ is the average element spacing. 
Tested extensions are indicated by check marks.
The ECG runtimes are separately denoted behind the dash as the experiments are more computationally demanding.}
\label{tab:setups}
\end{table}

In this work, we use a custom \verb!C++! implementation of the Fast Iterative
Method~\cite{FIM_tetra} to solve the eikonal equation \eqref{eq:eikonal}.
Note however that the method is independent of the chosen eikonal solver and may benefit from higher order or smoother solutions of different solvers.
A minimal working example for the method can be found on GitHub\footnote{\url{https://github.com/thomgrand/geasi_grid_demo}}, but is limited to the isotropic eikonal equation on structured grids using the Fast Marching Method \cite{sethian_fast_1996,tsitsiklis_1995_efficient} without FEM.
The ECG and geodesic computations, i.e.~\eqref{eq:ecg_bidomain_int} and~\eqref{eq:ode_eikonal_geodesics}, and its Jacobian computation are calculated using the \textsc{TensorFlow} framework\footnote{\url{https://www.tensorflow.org/}}, making use of available GPUs and enabling automatic derivation of~$V_l$ with respect to~$\widetilde{\Vm}$. 
All computations were performed on a single desktop machine with an Intel Core i7-5820K CPU using 6 cores of each 3.30GHz, 32GB of working memory and a NVidia RTX 2080 GPU.

 \begin{figure}[htb]
\resizebox{\linewidth}{!}{
     \includegraphics{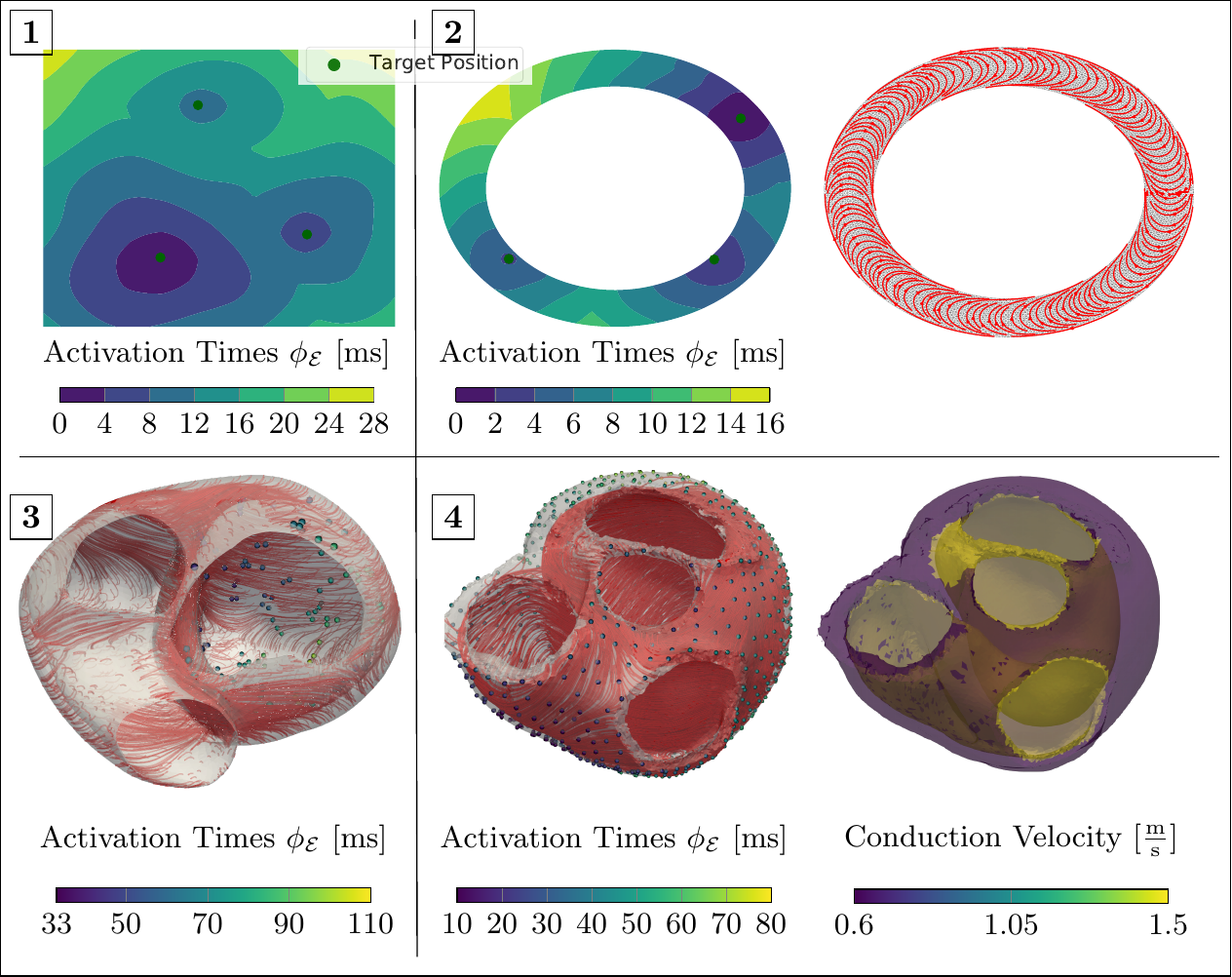}
}
     \caption{
     Activation times for all setups considered along with fiber orientations (if available).
     The isotropic conduction velocity of \protect\fbox{$\mathbf{1}$} is presented in \Cref{fig:geodesics} and exhibits no fiber orientation due to isotropy.
     Note that \protect\fbox{$\mathbf{3}$} was measured in-vivo and thus no ground truth is available.}
     \label{fig:exp_setups}
 \end{figure}

\subsection{Activation Time Optimization}\label{sec:act_times_exps}
In \Cref{fig:results_2d}, we present the results of \TheMethod for the 2D experiments: 
In the first iterations of the square example, the EASs are moved to the center of the domain to promote a good overall fit during optimization.
As the sites approach the center, fine details on the boundary can be fitted by minimizing the mismatch defining the optimal points.
The idealized LV model additionally requires a non-convex projection since the fiber alignment favors movements on the endocardial wall. 
The optimization still works for this case, even though the problem in~\eqref{eq:moreau_envelope} becomes non-convex.

\begin{figure}[htb]
    \centering
    \includegraphics{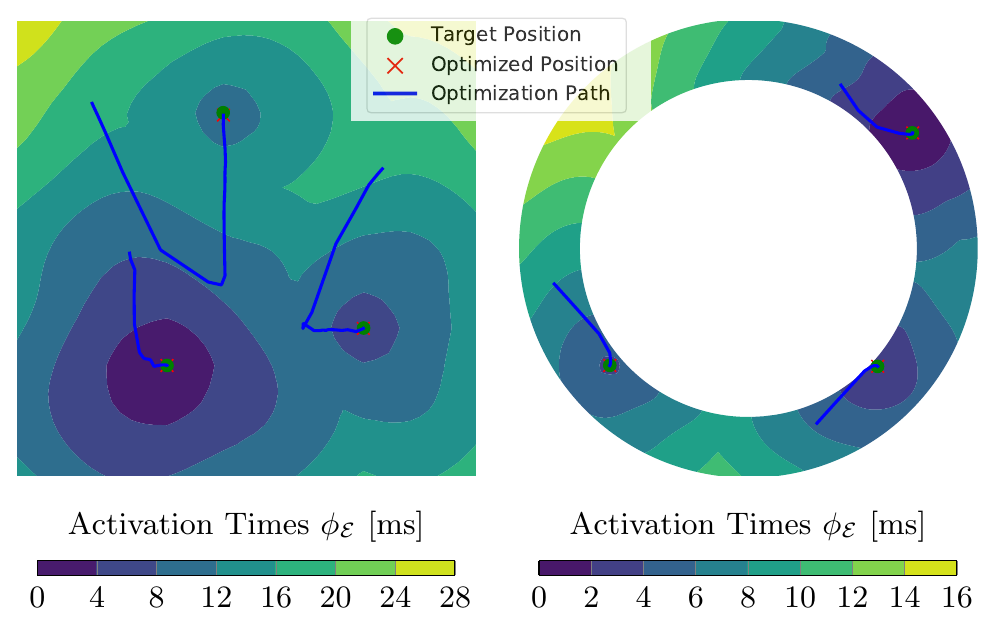}
    \caption{Results of \TheMethod for both 2D experiments. 
    We located the exact initiation sites with only a few iterations, both for the heterogeneous velocity case (left) and in the presence of non-convex projections for the idealized LV model (right).}
    \label{fig:results_2d}
\end{figure}

\begin{figure}[htb]

    \resizebox{\linewidth}{!}{
    \includegraphics{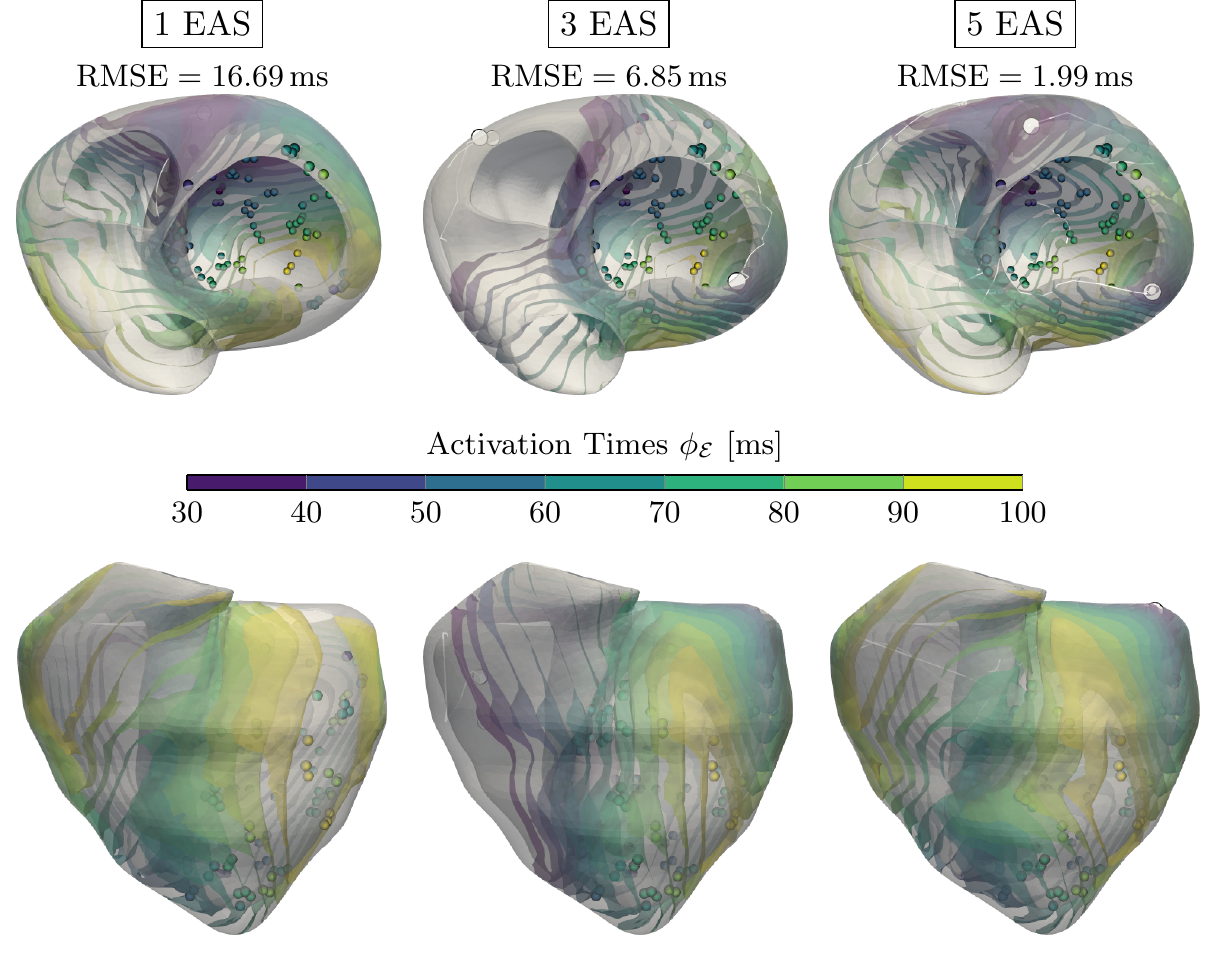}
    }
    \caption{Results for the CRT experiment with varying number~$N$ of EASs along with the RMSE (in ms) shown above each experiment. 
    The color-coded spheres indicate the observed activation times, while the white circles represent the optimized EAS positions. 
    The white trailing paths show the optimization path over the iterations. 
    Increasing $N$ lowers the overall RMSE, but may result in physiologically unlikely EAS (e.g.~top of the left ventricle for $N=5$).}
    \label{fig:results_3d_crt}
\end{figure}

\begin{figure}[htb]
    \resizebox{\linewidth}{!}{
    \includegraphics{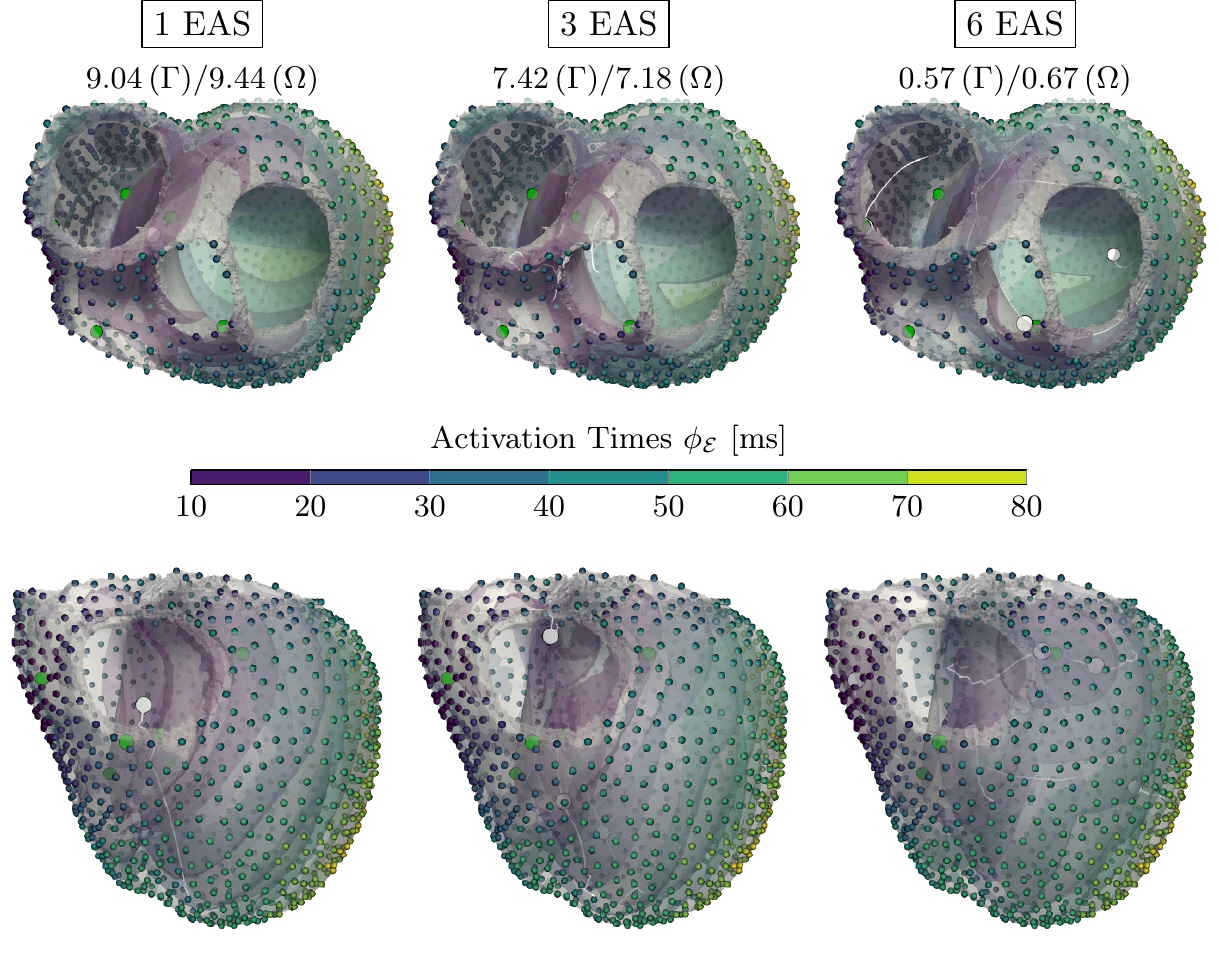}
    }
    \caption{Results for the trifascicular experiment along with the RMSE (in ms) shown above the experiments for both~$\Gamma$ and~$\Omega$.
    The color-coded spheres indicate the observed activation times.
    The white and green circles represent the optimized and target EAS, respectively.
    The overall RMSE activation error is very low if using the correct number of initiation sites ($N=6$), but we already obtain a good fit with fewer sites.
    }
    \label{fig:results_3d_trif}
\end{figure}

Next, we concentrate on 3D experiments in \Cref{fig:results_3d_crt,fig:results_3d_trif}, for which we alter the number of initiation points for both models. 
Even though we can not ensure that the activation of the clinically acquired CRT patient can be described by the eikonal model with the simple rule-based fiber orientation, the results on the CRT masurements provide an overall low root-mean-square error (RMSE) between modelled and measured activation times.
In the presence of a single EAS, the fit is (expectedly) sub-optimal since the activation requires a more complex activation pattern.
With three or more EASs, we get a much better fit, evenly distributed throughout the ventricles, but additional initiation sites are moved from the septum to the LV.
Note that there is no guarantee that the chosen rule-based fiber mapping can properly model the encountered ventricular activation.
In such a case, the additionally employed initiation sites are able to compensate possible modelling inaccuracies.
\label{rev:poorly_phrased}
We can achieve even better results by sucessively increasing the number of initiation sites, but this only reveals the nature of the ill-posed problem: 
By increasing the complexity of our model, we can more closely approximate the presented activation map (cf.~\Cref{prop:decreasing}). 

The trifascicular model has a higher resolution in comparison to the CRT model with an added fast conducting sub-endocardial layer, which is utilized in all longer geodesics from the measurements to the initiation sites. 
For this reason, it is important to properly project the geodesics in each iteration on the endocardium in a fast way.
For further details of the actual implementation we refer the reader to \Cref{sec:runtime}.
As a result, when using less EASs than in the ground truth we already achieve convincing numerical results, which is visualized in~\Cref{fig:results_3d_trif}.
If we incorporate 6~initiation sites, we get a very good fit, even though one of the activation sites is deactivated before convergence due to \eqref{eq:compatibilityCondition}.
The three septal points are jointly modelled by two EASs accounting for the deactivated point. 
Adding points beyond the given ones did not yield any improvement as they are deactivated by other points during the optimization (not shown).

\subsection{Topological Gradient}\label{sec:experiments_topo_grad}
We also tried to estimate the correct number of EASs by using topological gradients (see \Cref{sec:topo_grad_method}), where we analyzed all 2D setups and the CRT patient.
In this case, a splitting can only occur if the ratio~$\frac{\nu_{M,\varepsilon}}{\nu_{S,\varepsilon}}$ is below $10^{-1}$ (2D)/$2.5\cdot 10^{-1}$ (3D)
and the maximum Euclidean distance of the position of two consecutive iterates among all EASs is smaller than $10^{-2}h$ (with $h$ being the average element spacing).

The minimizer of~\eqref{eq:min_split} is chosen by evaluating $360$ (2D)/$5625$ (3D) directions, which are evenly distributed on the hypersphere.
We additionally ensure that the splitting direction is feasible (i.e.~it does not point outside the domain) by projecting the samples onto the mesh.
To avoid two coalescing EASs inside one element after a split, the points are moved apart by $2h$ from the original site.

In \Cref{fig:results_topo_grad_2d}, we collected the results for all 2D experiments using this method and plot the ratio $\frac{\nu_{M,\varepsilon}}{\nu_{S,\varepsilon}}$ over the iterations.
The first EAS is moved towards the center of the ground truth EASs, and subsequently several splits occur that closely match the ground truth sites. 
A similar behavior can be observed in the idealized LV model.

For the CRT patient in \Cref{fig:results_topo_grad_3d}, neither the ground truth EASs nor the fiber distribution and velocities in~$\Omega$ are known.
In total, the algorithm introduced $8$~splits (i.e.~$N=9$), of which $4$ are deactivated during optimization since they violated \eqref{eq:compatibilityCondition}.
Only those final EASs are shown in the right plot of \Cref{fig:results_topo_grad_3d}.
Moreover, we can see that three main clusters are identified, where one initiation cluster is located at the upper part of the anterior septum.
The optimization in this region is further complicated by the very thin wall of the 3D mesh, which likely causes the high number of splits.
We highlight that constant (in time) split ratios are caused by temporarily deactivated EASs violating~\eqref{eq:compatibilityCondition}.
To conclude, we get a tremendous fit with the presented measurement points despite the aforementioned model assumptions.
Moreover, the topological gradient could be successfully applied to all 2D models leading to the correct estimate for the number of EASs and also matching the correct sites.
The corresponding results in 3D provide a very low overall RMSE on the measurements.

\begin{figure}[htb]
    \centering
    \includegraphics{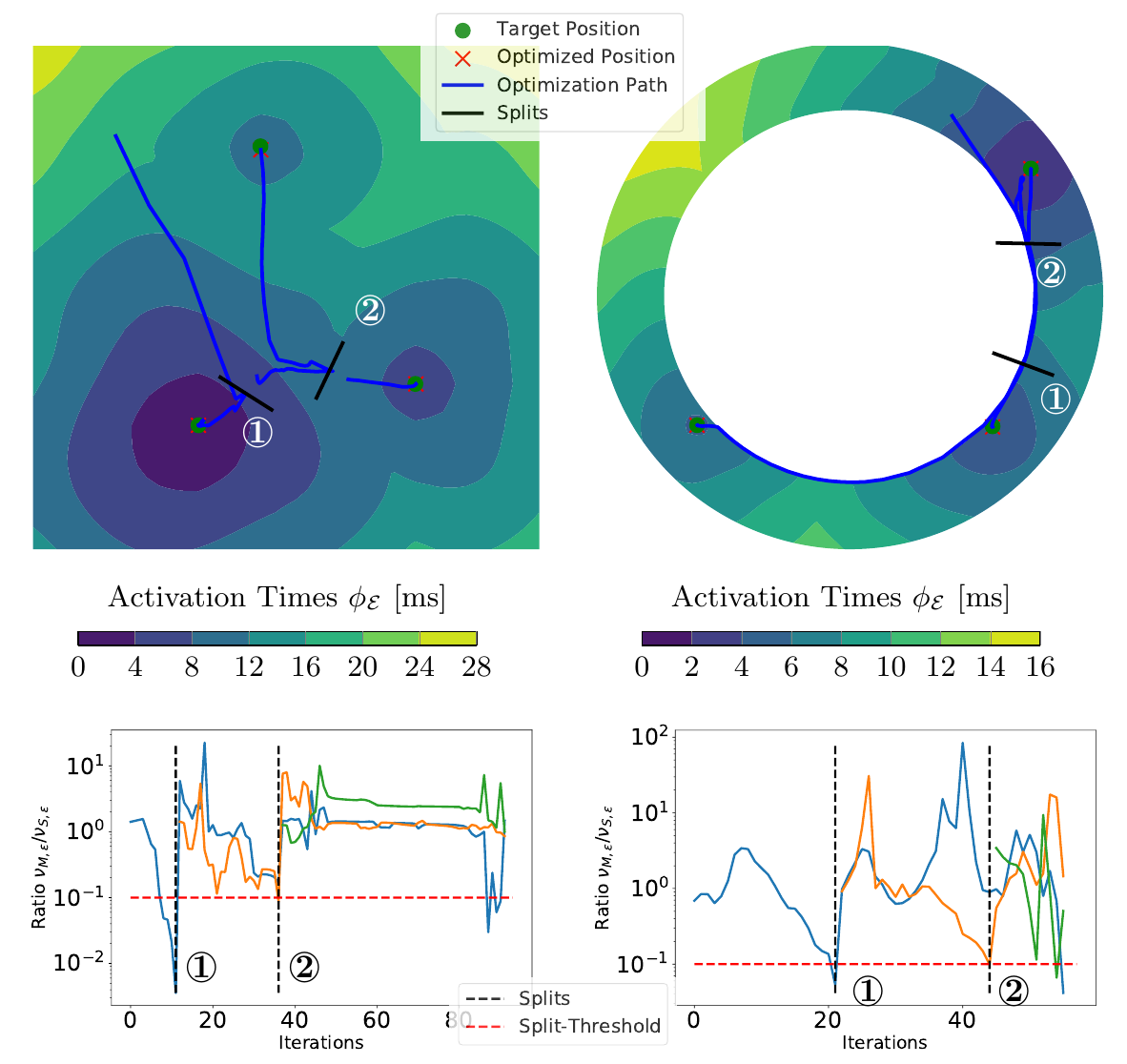}
    \caption{Results of the 2D experiments for the topological gradient.
    Top row: optimization paths starting with a single EAS.
    Bottom row: plots of $\frac{\nu_{M,\varepsilon}}{\nu_{S,\varepsilon}}$ for each EAS depending on the iterations, where an EAS is split if this ratio is below the dotted red line.
    The location (top) and iteration (bottom) of the splits are marked by \mytextcirc{1} and \mytextcirc{2}.
    Note that an EAS only splits if all parameters have converged (see \Cref{sec:experiments_topo_grad}).}
    \label{fig:results_topo_grad_2d}
\end{figure}

\begin{figure}[htb]
    \resizebox*{\linewidth}{!}{
    \includegraphics{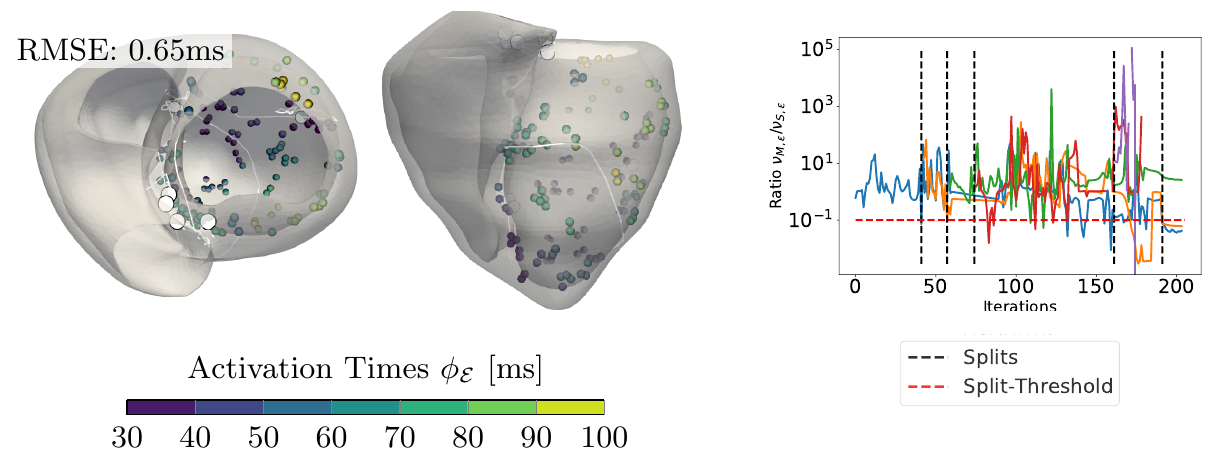}
    }
    \caption{Results for the topological gradient extension on the CRT experiment with a visualization analogous to \Cref{fig:results_topo_grad_2d}.}
    \label{fig:results_topo_grad_3d}
\end{figure}

\subsection{ECG}\label{sec:experiments_ecg}
In what follows, we present numerical results for the ECG optimization for both 2D experiments as well as the trifascicular model in a simplified fashion as a proof-of-concept.
The ECG requires an additional full torso domain~$\Omega_T$ and the computation of the lead fields.
For all experiments in this paper, we embedded all three in-silico experiments (i.e.~\fbox{$\mathbf{1}$},\fbox{$\mathbf{2}$},\fbox{$\mathbf{3}$}) into a non-equilateral cube-torso without any additional organs and an overall torso conductivity of \SI{0.2}{\milli\siemens\per\mm}.
The size of the cube-torso is proportional to the bounding-box of~$\Omega$.
The computed lead fields are shown in \Cref{fig:ecg_setups}. 
In all cases, we generated a noiseless target ECG from the reference model setup with parameters and initiation sites already presented in \Cref{sec:experiments}.
We optimize our model with random initialization with respect to this target ECG.
Note that we do not focus on the generated ECGs' absolute potentials, since this heavily depends on the actual torso setup.
Instead, we rather focus on the overall morphology of the ECGs.

\label{rev:R2green}
To compute the lead fields in \eqref{eq:lead_field}, our cube-torso is sampled using a structured regular grid of $100^d$ equidistant points, and the problem is solved with a finite difference scheme, which is sufficiently accurate since the lead field is evaluated far away from the singularity~\cite{potse2018}.
The lead fields are computed prior to the optimization since they remain constant.
The ECG signals for the 3D models are mean-filtered with a small kernel of size $\approx \SI{2}{\ms}$ to improve accuracy.

\begin{figure}[htb]
    \centering
    \includegraphics{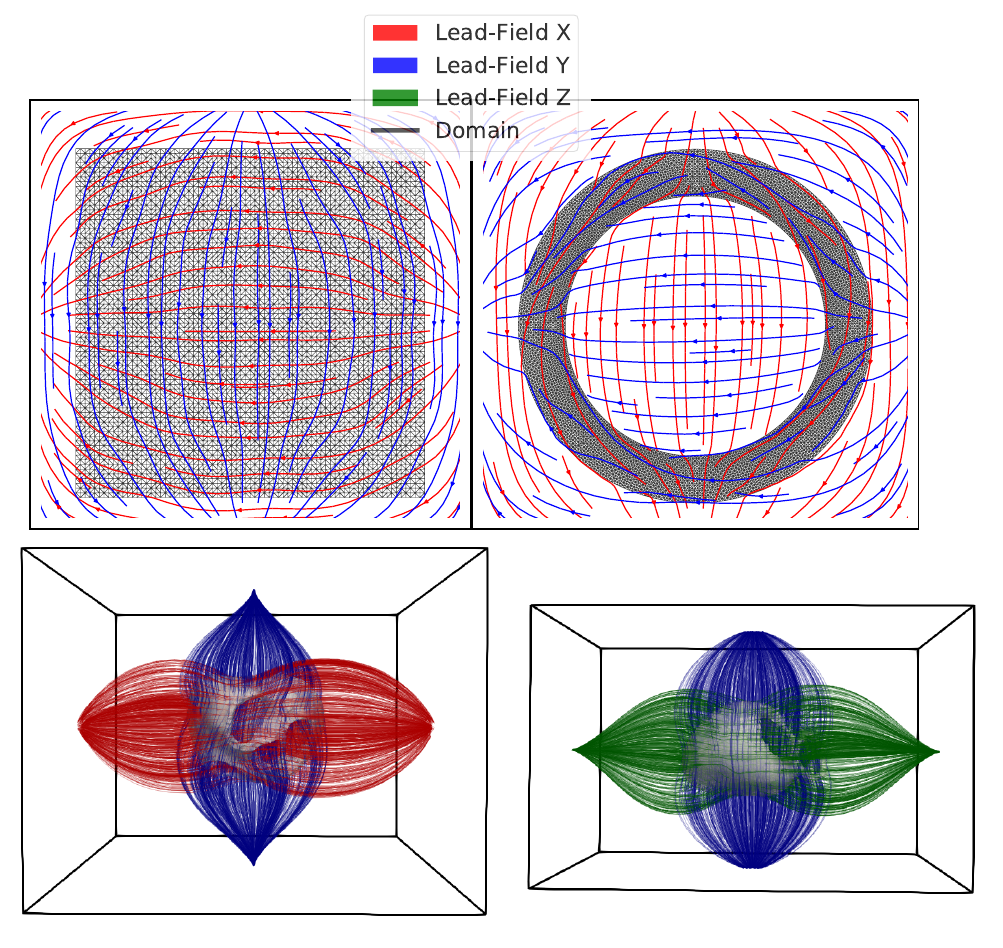}
    \caption{Setup for the ECG experiments showing the torso domain $\Omega_T$.
    The heart domain $\Omega$ is indicated by black lines for the 2D experiments and gray silhouettes for 3D. 
    The streamlines visualize the lead fields.
    Note that the lead field for axis Z (green) is only present in the 3D experiments.}
    \label{fig:ecg_setups}
\end{figure}

Optimization solely based on the ECG is frequently very challenging.
However, with a proper initialization $(\vx_i,t_i)$, good fits for the ECGs can be computed.
\Cref{fig:results_ecg} shows the optimization paths, as well as initial, target and optimized ECGs using the modified \TheMethod algorithm presented in \Cref{sec:ecg_method} for the 2D examples, which are computed in approximately $2.5$~hours each.
The two potentials are a result of the two axis-aligned lead-fields (see \Cref{fig:ecg_setups}). 

In the numerical experiments, it turned out that that the overall step size~$\beta_s$ has to be chosen smaller compared to the activation timing problem.
The morphology of the initial ECG and the optimized ECG differ by a large margin, making the fitting non-trivial. 
As a result, in both the square domain and the idealized LV experiment we are able to closely match the actual sites from which the target ECG was generated (\Cref{fig:results_ecg}, second row).

\begin{figure}[htb]
    \centering
    \includegraphics{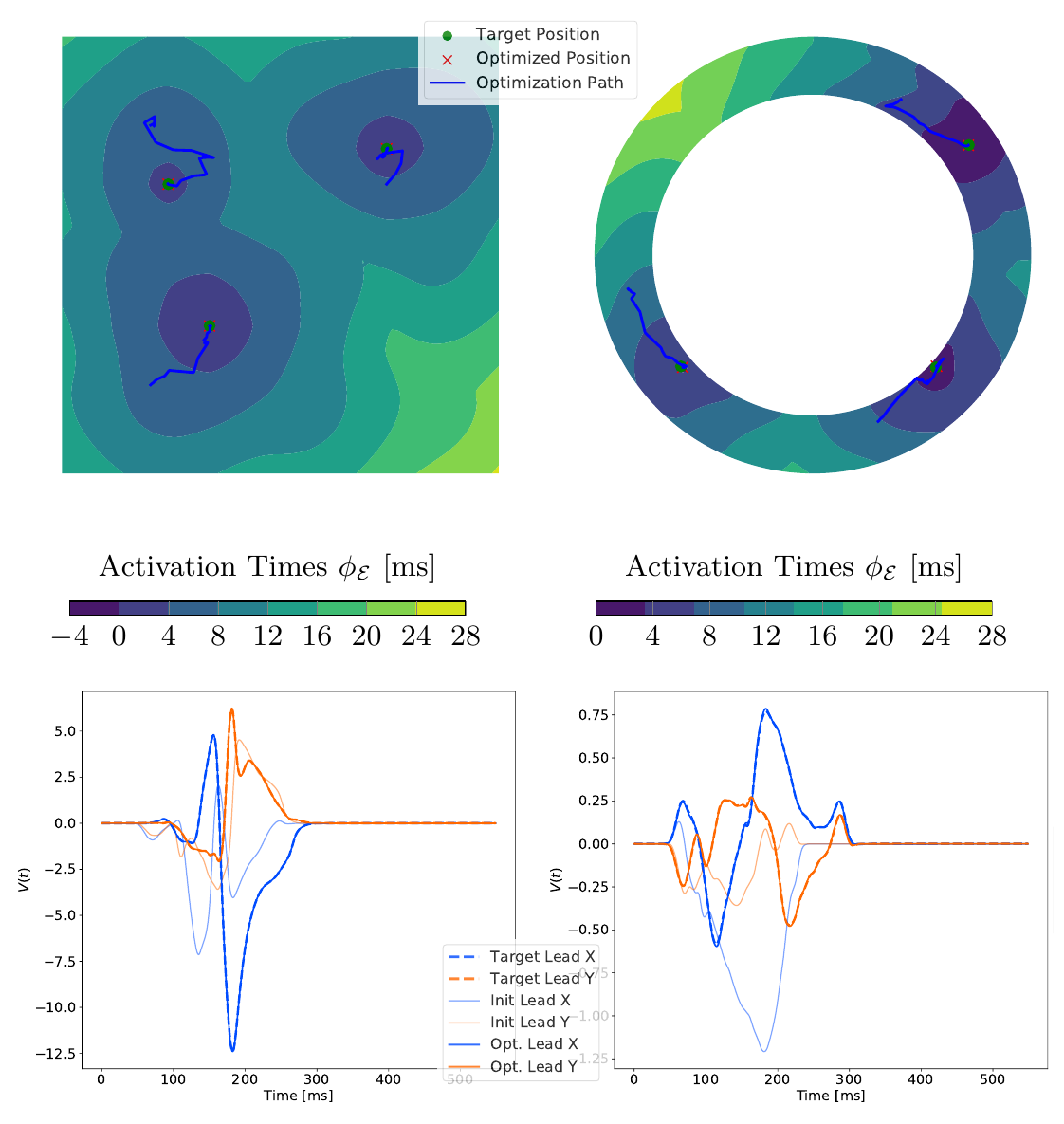}
    \caption{Results of the 2D ECG optimization. 
    Top row: temporal change of the positions of the EASs along with the ground truth.
    Bottom row: initial, final and target ECG for fitting.}
    \label{fig:results_ecg}
\end{figure}

\begin{figure}[htb]
    \centering
    \includegraphics{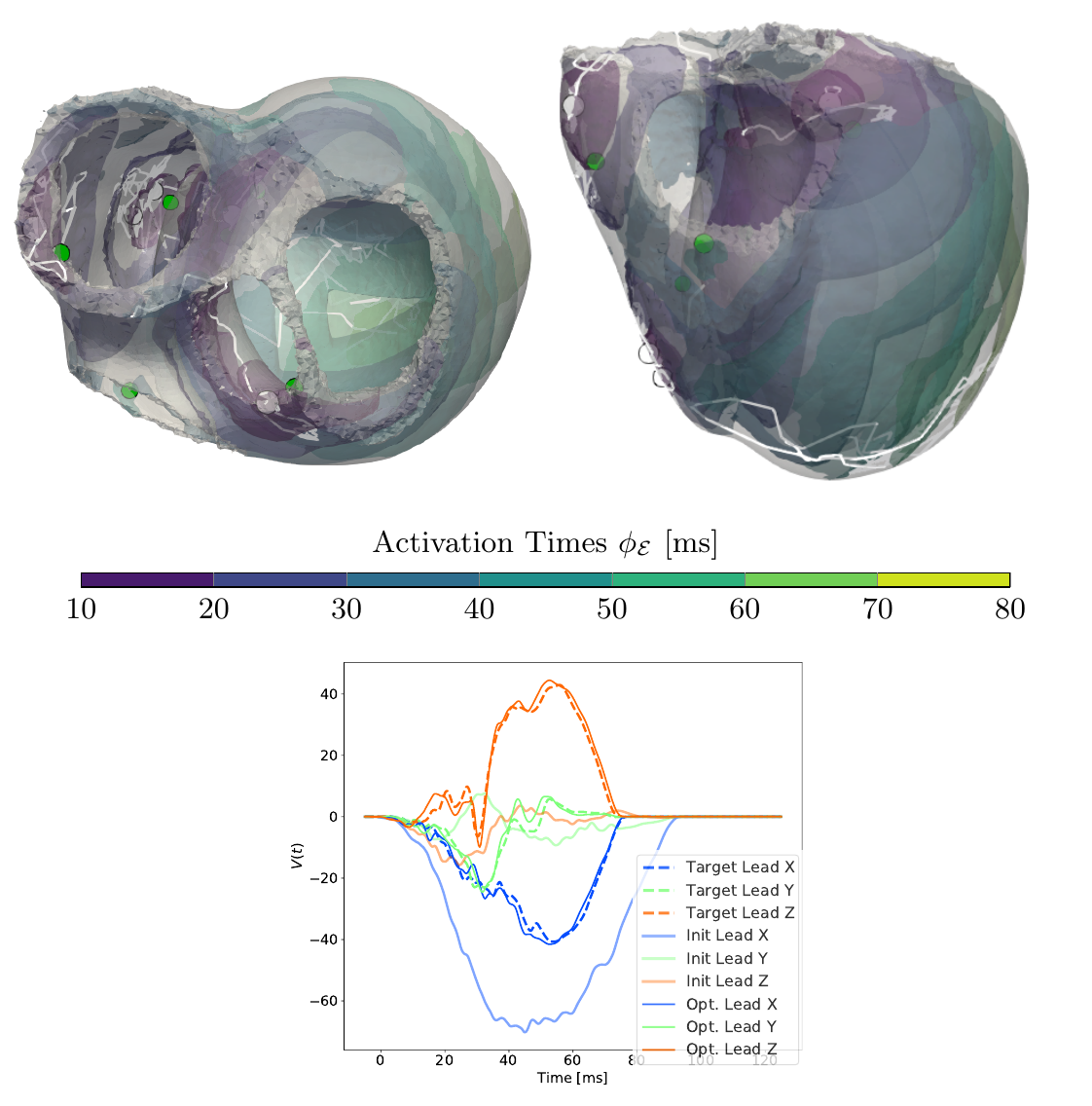}
    \caption{Results of the ECG optimization on the trifascicular model. 
    Top row: optimized positions of EASs (white circles) along with temporal changes over the iterations (white lines).
    The green circles represent the target position from which the ground truth was generated.
    Bottom row: initial, final and target ECG.}
    \label{fig:results_triF_ecg}
\end{figure}

The trifascicular model in \Cref{fig:results_triF_ecg} is computationally demanding since in each iteration step a computation of all geodesics is required, i.e.~we need to solve $\approx 10^{5}$ ODEs per iteration (for further details we refer to \Cref{sec:runtime}).
As each initial EAS is randomly chosen, the initial ECG significantly differs from the target.
Note that the 3D cube torso exhibits three axis-aligned leads.
Since lead-X and lead-Z have the most prominent peaks, they have the largest effect on the resulting $L^2$-error.
After the optimization, these peaks were fitted by the algorithm by shifting most of the initiation sites to the LV and one to the anterior wall and septal region.
The added difficulty with an activation featuring that many EASs is also apparent from the computed paths (white lines) which strongly vary during optimization.
After termination, $4$ of $6$ sites are close to the ground truth sites defining the target ECG.

\section{Discussion}\label{sec:discussion}
In the previous section, we have experimentally demonstrated the broad applicability of the proposed \TheMethod method for a variety of problems.
Despite the convincing results there are still some issues related to our approach and alternative approaches, which will be addressed in future work.

\subsection{Eikonal Equation}
\label{sec:discussion_eikonal}
In this work, we rely on the anisotropic eikonal equation, but other versions thereof are also applicable.
More specifically, several eikonal frameworks to model physical and medical processes have been proposed over the last three decades, which
can be derived from either the monodomain or the bidomain equation using a perturbation argument~\cite{colli2014}.
The most common equation inferred from a first-order approximation of the monodomain equation is the anisotropic eikonal equation~\eqref{eq:eikonal}.
The eikonal model originating from the bidomain model is slightly different and is based on a Finsler-type metric~\cite{ambrosio2000asymptotic}. 

Second-order approximations lead to the curvature-eikonal, diffusion-eikonal and viscous-eikonal equations.
In the curvature-eikonal model~\cite{keener1991eikonal}, the front velocity is corrected by the curvature of the front in the metric induced by the conductivity tensor.
In contrast, in the diffusion-eikonal equation~\cite{franzone1990wavefront} a diffusion term is added to the right-hand side of~\eqref{eq:eikonal}.
Finally, in the viscous-eikonal model~\cite{kunisch_inverse_2019}, a squared eikonal equation is considered, which is corrected by a diffusion term.

Higher-order approximations have also been proposed, but are rarely used in practice~\cite{dierckx2011accurate}.
The effect of higher-order terms is more pronounced in front collisions, at the boundary of the domain and in narrow channels, e.g., in scarred tissue.
In practice, however, deviations from the classical eikonal model are minimal and is therefore widely accepted for personalization of cardiac models.

The distinction between these models is however important from the point of view of the EASs and is often dictated by the numerical method rather than the physiology.
In the standard anisotropic eikonal model~\eqref{eq:eikonal}, EASs can be single points, whereas in the curvature-eikonal equation EASs are required to have a strictly positive Lebesgue measure.
Note that the conduction velocity of a spherical front with small radius is significantly slower in the presence of higher-order correction terms.

\subsection{Runtime}\label{sec:runtime}
The majority of the computational time is spent for solving the geodesics in \eqref{eq:ode_eikonal_geodesics}, performed in parallel on the GPU.
We highlight that the number of geodesics is proportional to the size of~$\Gamma$ in the original version (\Cref{alg:geasi_phi}) and proportional to~$\Omega$ in the modified version (\Cref{sec:ecg_method}).
The computation of all geodesics in both cases is performed in parallel on a GPU and therefore scales well with the mesh size. 
The bulk of computational time inside the ODE solver is spent on the projection of each ODE solution back onto the mesh and nearest neighbor computation. 
For the nearest neighbor computation, we implemented a custom KD-Tree implementation (publicly available on GitHub\footnote{\url{https://github.com/thomgrand/tf_kdtree}}). 
For the projection operator, we extract the surface of the mesh, prior to the computation using the truncated signed distance function from VTK\footnote{\url{https://vtk.org/}}.
The K-nearest neighbor elements of the current positions of the geodesics are then queried to calculate the analytical projection onto all reference elements.
The projection to all nearest neighbors is the minimum distance projection onto the mesh~$\Omega$.
\label{rev:R5geo}
Note that the time of each individual ODE solution in~\eqref{eq:ode_eikonal_geodesics} depends on the length of the associated geodesic.
Since adding new points can only shorten geodesic lengths (compare~\eqref{eq:distance} and \Cref{sec:topo_grad_method}), more EASs will result in faster convergence and reduced computation time.

Solving the eikonal equation in \eqref{eq:eikonal} as well as the Gauss--Newton optimization in \eqref{eq:opt_problem} only requires a minor portion of the computational time.
As already mentioned, the activation time optimization is much faster compared to the remaining computations.
In total, the experiments were finished within about 100 iterations only taking approximately 30 minutes and 90 minutes for 2D and 3D experiments, respectively.
The experiments for the topological gradient behaves similarly regarding computational time.
In contrast, the 3D optimization in the ECG problem requires approximately 12 hours.

To further decrease runtime, several approaches are possible: 
A custom GPU implementation to solve \eqref{eq:ode_eikonal_geodesics} along with the projection could significantly speed-up the optimization. 
Additionally, we often witnessed a collapse of many geodesic paths, especially in the trifascicular model, making subsequent computations redundant. 
An adaptive sampling from the measurement domain $\Gamma$ combined with a proper upsampling technique could increase performance at the cost of precision.

 \begin{figure}[htb]
     \centering
     \includegraphics[width=.5\textwidth]{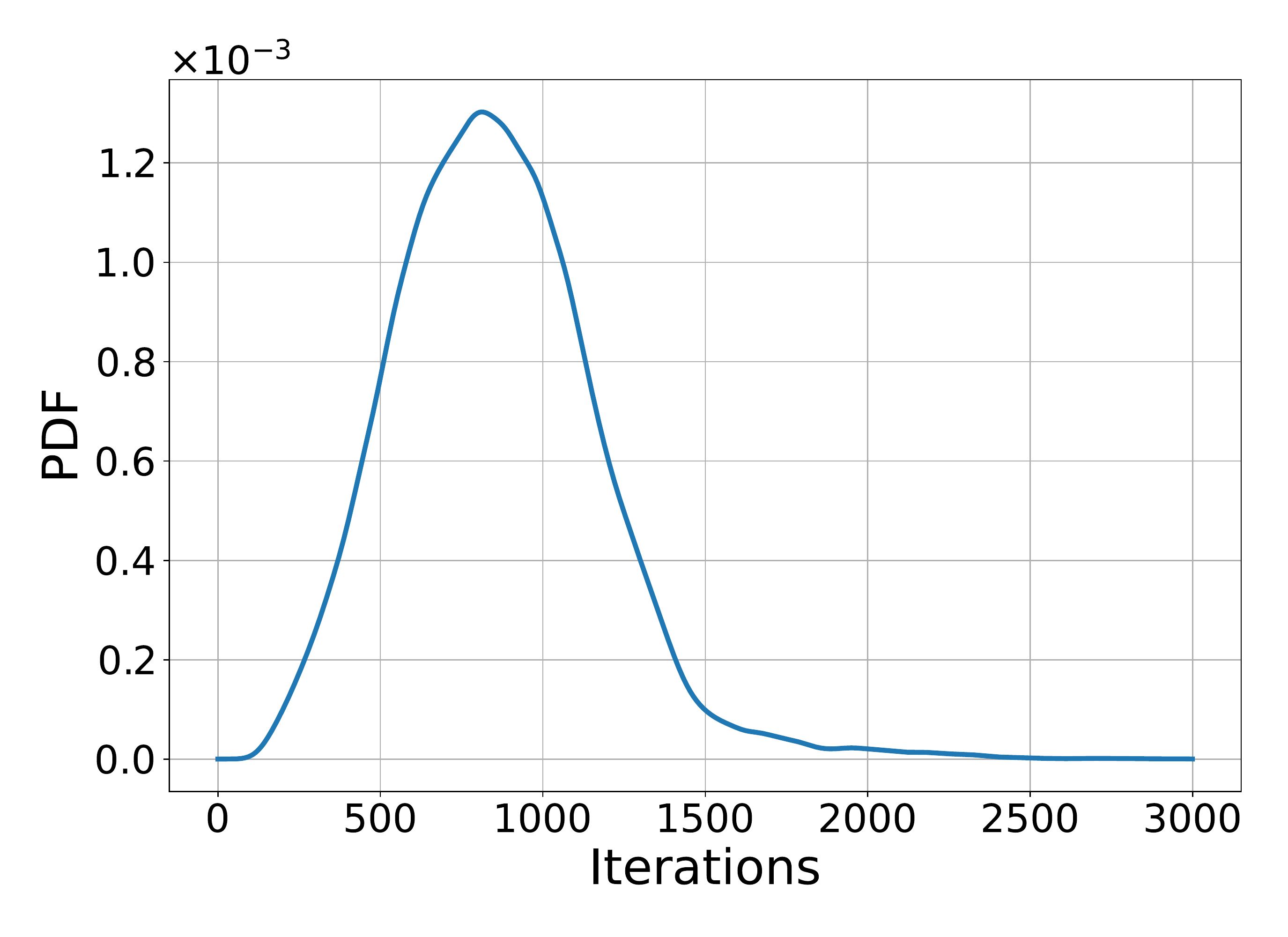}
     \caption{Convergence of the geodesic ODE in \eqref{eq:ode_eikonal_geodesics} for the trifascicular model over the iterations with a single EAS in the septum.
     The majority of the geodesics converge before 2000~iterations.
     }
     \label{fig:geodesics_convergence}
 \end{figure}
 
To improve performance for the 3D ECG optimization, we analyzed the convergence of the ODEs.
\Cref{fig:geodesics_convergence} shows a probability density function (PDF) of convergence of the geodesics $\gamma$ over the number of required iterations using the trifascicular model with a single initiation site in the septum.
Convergence in this case is defined as the first time two subsequent ODE iterations of~\eqref{eq:ode_eikonal_geodesics} have a change of less than $10^{-10}$, i.e.~$\norm{\gamma(t_{k+1})-\gamma(t_{k})}<10^{-10}$. 
We see that many of the computed geodesics converge very quickly, while points with a high geodesic distance need significantly more iterations before convergence. 
Our vectorized/parallel implementation to solve \eqref{eq:ode_eikonal_geodesics} exploits this fact to only include non-converged geodesics.

\subsection{ECG}
\label{sec:discussion_ecg}
The ECG results demonstrated that \TheMethod can be used to fit a given ECG. 
However, one main problem is getting stuck in local minima. 
While the $L^2$-error is relatively low in these minima, the morphology of the optimized and the target ECG differ a lot. 
One of the main reasons for this problem could result from the usage of the $L^2$-error, which is not robust to transformations of the time series, such as time shifts.
Better error measures for this type of optimization include dynamic time warping \cite{sakoe_dynamic_1978} and the Wasserstein distance~\cite{yang_application_2017}.
Finally, different optimization algorithms could further help to overcome this issue.

\section{Conclusion and Future Work}
\label{sec:conclusion}
This paper introduced the novel \TheMethod method to find the optimal source points of an eikonal model with a special focus on electrophysiological examples. 
We showed that \TheMethod can model complex eikonal activations, either from the activation times directly, or by fitting a given ECG. 
For our model examples, we were able to identify most of the ground truth EASs along with times, and in the case of the topological gradient also the number.
We were even able to model CRT measured data with only a few source points.


So far, we only assumed fixed conduction velocities and fiber distributions for all cases.
In future studies we intend to estimate these parameters using the same procedure with only minor necessary modifications to \eqref{eq:opt_problem}.
We note that past studies \cite{grandits_inverse_2020,Grandits2020PIEMAP} have already shown that for the optimization of conductivities, additional regularization is crucial to decrease model complexity.

\label{rev:R1rd}
\TheMethod is inherently connected to the anisotropic eikonal equation through the Hamilton--Jacobi formalism. 
Thus, a possible extension of \TheMethod to reaction-diffusion models, possibly with non-local diffusion terms~\cite{bueno2014fractional} and more complex boundary effects~\cite{bishop2011,vergara2016coupled}, is not trivial and probably requires a hybrid reaction-diffusion-eikonal approach~\cite{neic_efficient_2017}.

All extensions of \TheMethod such as topological gradient and ECG optimization hold much promise for future applications in clinical real-world examples.
\TheMethod faces several computational hurdles, many of which we already tackled in this study.
We hope to further improve and expand \TheMethod---both methodologically and computationally---to enlarge the applicability to a wide-range of problems.
\label{rev:R1appl}
Several pathological scenarios nicely fit in the \TheMethod framework, and the proposed method
could potentially greatly improve the identification of the site of origin of premature ventricular contractions and monomorphic ventricular tachycardia~\cite{calkins1993relation}.
\TheMethod could also be applied to improve planning of therapeutic interventions
such as cardiac rhythm management with optimal placement and number of pacing leads~\cite{auricchio2003clinical}.
Therefore, we believe that \TheMethod has the potential to significantly advance and improve personalized health care in the future.

\paragraph{Acknowledgements}
This work was financially supported by the Theo Rossi di Montelera Foundation,
the Metis Foundation Sergio Mantegazza,
the Fidinam Foundation,
the Horten Foundation and the CSCS—Swiss National Supercomputing Centre production grant s778.
This research was supported by the grants F3210-N18 and I2760-B30
from the Austrian Science Fund (FWF) and BioTechMed Graz flagship award 
``ILearnHeart'', as well as ERC Starting grant HOMOVIS, No.\ 640156.
A.~Effland was funded by the German Research Foundation under Germany's Excellence Strategy -- EXC-2047/1 -- 390685813 and -- EXC2151 -- 390873048.

\bibliographystyle{ieeetr}
\bibliography{geasi,appl}

\end{document}